\newcommand{\La}{\langle}
\newcommand{\Ra}{\rangle}
\newcommand{\df}{\buildrel\rm{def}\over=}
\newcommand{\Bel}{\mathbf B}
\newcommand{\cz}{Calder\'{o}n--Zygmund }
\newcommand{\pd}{\partial}
\def\supp{\operatorname{supp}}
\newcommand{\ch}{\operatorname{ch}}
\newcommand{\QQ}{[w]_{A_2}}
\newcommand{\eps}{\varepsilon}
\newcommand{\la}{\lambda}
\newcommand{\vf}{\varphi}
\newcommand{\Om}{\Omega}
\newcommand{\cB}{\mathcal B}
\newcommand{\cD}{\mathcal D}
\newcommand{\cI}{\mathcal I}
\newcommand{\cL}{\mathcal L}
\newcommand{\cS}{\mathcal S}
\newcommand{\bR}{\mathbb R}
\newcommand{\mfA}{\mathfrak A}
\def\sp{\operatorname{sp}}
\newcommand{\1}{\mathbf{1}}
\newtheorem{theorem}{Theorem}[section]
\newtheorem{lemma}[theorem]{Lemma}
\theoremstyle{definition}
\numberwithin{equation}{section}
\newcounter{vremennyj}
\begin{document}
\title[Strong and restricted weak weighted  square function estimates]%
{
On strong weighted and restricted weak weighted estimates of the square function
}
\author[P.~Ivanisvili]{Paata Ivanisvili}
\address{Department of Mathematics, Michigan Sate University, East Lansing, MI. 48823}
\email{ivanisvi@math.msu.edu \textrm{(P. \ Ivanisvili)}}
\author[P. Mozolyako]{Pavel Mozolyako}
\thanks{PM is supported by the Russian Science Foundation grant 17-11-01064}
\address{Universit\`{a} di Bologna, Department of Mathematics, Piazza di Porta S. Donato, 40126 Bologna (BO)}
\email{pavel.mozolyako@unibo.it}
\author[A.~Volberg]{Alexander Volberg}
\thanks{AV is partially supported by the NSF grant DMS-1265549 and by the Hausdorff Institute for Mathematics, Bonn, Germany}
\address{Department of Mathematics, Michigan Sate University, East Lansing, MI. 48823}
\email{volberg@math.msu.edu \textrm{(A.\ Volberg)}}
\makeatletter
\@namedef{subjclassname@2010}{
  \textup{2010} Mathematics Subject Classification}
\makeatother
\subjclass[2010]{42B20, 42B35, 47A30}
%
%
\keywords{}
\begin{abstract}
In this note  we give a sharp weighted estimate for square function  from $L^2(w)$ to $L^2(w)$, $w\in A_2$. This has been known. We also give sharp weighted estimate {\it on test functions} for square function  from $L^2(w)$ to $L^{2,\infty}(w)$, $w\in A_2$. This has not been known.
We also reduce the full estimate for square function  from $L^2(w)$ to $L^{2,\infty}(w)$, $w\in A_2$ to a the estimate of the solution of a certain partial differential inequality.
 
\end{abstract}
\maketitle


\section{Introduction}
\label{intro}

In this note  we give a sharp weighted estimate for square function  from $L^2(w)$ to $L^2(w)$, $w\in A_2$. This has been known. We also give sharp weighted estimate {\it on test functions} for square function  from $L^2(w)$ to $L^{2,\infty}(w)$, $w\in A_2$. This has not been known.

We also reduce the full estimate for square function  from $L^2(w)$ to $L^{2,\infty}(w)$, $w\in A_2$ to a the estimate of the solution of a certain partial differential inequality.

\section{Sharp weighted estimate $S: L^2(w)\to L^2(w)$. The lego construction.}
\label{strongS}

We use the approach quite different from the approach in \cite{HuTV}, where such sharp estimate was first obtained. Notice that more general sharp weighted  estimates for square function in $L^p(w)$ were obtained since then by Lerner
\cite{Le}.

\begin{equation}
\label{strongT}
\frac{1}{|J|} \sum_{I\in D(J)} |\Delta_I w^{-1}|^2 \La w\Ra_I |I|\le C_{s,T}(\QQ) \La w^{-1}\Ra_J\,.
\end{equation}

\begin{equation}
\label{strong}
\frac{1}{|J|}\sum_{I\in D(J)} |\Delta_I (\vf w^{-1})|^2 \La w\Ra_I  |I| \le C_s(\QQ) \La \vf^2w^{-1}\Ra_J\,.
\end{equation}

{\bf Remark.}
We are working with operator $S: L^2(w)\to L^{2,\infty}(w)$ or $S: L^2(w)\to L^{2}(w)$. However, it is more convenient to work with isomorphic objects: $S_{w^{-1}}: L^2(w^{-1})\to L^{2,\infty}(w)$ or $S_{w^{-1}}: L^2(w^{-1})\to L^{2}(w)$, here $S_{w^{-1}}$ denotes the product $SM_{w^{-1}}$, where $M_{w^{-1}}$ is the operator of multiplication.

\bigskip

\subsection{Sharp  estimate for $C_{s, T}(\QQ)$.}
\label{CsT}

{\bf Definition.}
\label{d2}
For a smooth function $B$ of $d$ real variables $(x_1, \dots, x_d)$ we denote by $d^2 B(x)$ the second differential form of $B$, namely,
$$
d^2B(x) = (H_B(x)  dx, dx)_{\bR^d}\,,
$$
where vector $dx= (dx_1, \dots, dx_d)$ is an arbitrary vector in $\bR^d$, and $H_B(x)$ is the $d\times d$  matrix of the second derivatives of $B$ (Hessian matrix) at point $x\in \bR^d$.

For brevity we write $A_2$ in this Section, but we mean the dyadic class $A_2$.

\begin{theorem}
\label{t-strongT}
$C_{s,T} \le A \QQ^2 $ and this estimate is sharp.
\end{theorem}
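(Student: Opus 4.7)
The plan is to prove the upper bound $C_{s,T}(\QQ)\le A\QQ^2$ via a Bellman function argument, and to verify sharpness through the ``lego'' construction announced in the section title.

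For the upper bound I would set $x_I := \La w\Ra_I$ and $y_I := \La w^{-1}\Ra_I$; the dyadic $A_2$ hypothesis places the pair $(x_I,y_I)$ in the hyperbolic domain $\Omega_\QQ := \{(x,y)\in(0,\infty)^2 : 1\le xy\le \QQ\}$ for every $I\in D(J)$, and the Haar identity gives $|\Delta_I w^{-1}|^2 = (y_{I^+}-y_{I^-})^2/4$. I would then seek a Bellman function $\mathfrak B:\Omega_\QQ\to[0,\infty)$ with (i) the size bound $0\le \mathfrak B(x,y)\le A\QQ^2\,y$, and (ii) the concavity gain
$$
 \mathfrak B(x,y) - \tfrac12\bigl[\mathfrak B(x^+,y^+)+\mathfrak B(x^-,y^-)\bigr] \ge c\cdot x\,(y^+-y^-)^2
$$
for every midpoint splitting $(x,y)=\tfrac12[(x^+,y^+)+(x^-,y^-)]$ inside $\Omega_\QQ$. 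Telescoping (ii) over $D(J)$ weighted by $|I|$ produces $\mathfrak B(x_J,y_J)\,|J| \ge 4c\sum_{I\in D(J)}\La w\Ra_I|\Delta_I w^{-1}|^2 |I|$, and (i) then yields $C_{s,T}(\QQ)\le \tfrac{A}{4c}\QQ^2$.

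Constructing $\mathfrak B$ is the technical core. The infinitesimal form of (ii) is the Monge--Amp\`ere-type inequality $\mathfrak B_{xx}(\mathfrak B_{yy}+4cx) \ge \mathfrak B_{xy}^2$ with $\mathfrak B_{xx}\le 0$ and $\mathfrak B_{yy}\le -4cx$ on $\Omega_\QQ$. A natural ansatz is $\mathfrak B(x,y) = \QQ^2\, y\, F(xy/\QQ)$ with $F:[\QQ^{-1},1]\to[0,1]$ and $F(1)=0$, reducing the matrix inequality to a scalar ODE for $F$; the size bound (i) becomes the pointwise bound $F\le A$, adjustable via the boundary data at $xy=\QQ^{-1}$.

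For sharpness I would carry out the lego construction. One fixes an extremal trajectory $t\mapsto (x(t),y(t))$ in $\Omega_\QQ$ along which (ii) is infinitesimally saturated by $\mathfrak B$, and inductively builds $w$ on $J$ so that at each dyadic level the pair $(\La w\Ra_{I^+},\La w^{-1}\Ra_{I^+})$ advances along this trajectory while $(\La w\Ra_{I^-},\La w^{-1}\Ra_{I^-})$ is its balancing reflection through the parent pair. Stacking $\sim \log \QQ$ such bricks produces a weight whose testing ratio on $J$ reaches order $\QQ^2\,\La w^{-1}\Ra_J$ while $[w]_{A_2^d}$ remains bounded by $\QQ$, certifying sharpness. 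The two main obstacles I foresee are (1) solving the Bellman ODE in closed form while preserving strict concavity across all of $\Omega_\QQ$ and meeting the size bound at both boundary components $xy=1$ and $xy=\QQ$, and (2) tuning the brick ratios at the lego stage so the iterated $A_2^d$-characteristic does not grow past $\QQ$ while the testing sum nonetheless accrues like $\QQ^2$.
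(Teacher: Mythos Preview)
Your Bellman-function strategy for the upper bound is exactly the paper's approach: a function on the hyperbolic strip with size control $\lesssim \QQ^2 y$ and a discrete concavity gain $\gtrsim x(y^+-y^-)^2$, followed by telescoping. Two points where your outline diverges from, or misses, what the paper actually does are worth flagging.

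First, your obstacle (1) is resolved in the paper by a domain-enlargement trick you do not mention. The infinitesimal inequality $-d^2\mathfrak B \ge 2cx\,(dy)^2$ on $\Omega_\QQ$ does \emph{not} by itself give the discrete inequality (ii) for triples in $\Omega_\QQ$, because the segment $[(x^-,y^-),(x^+,y^+)]$ can exit $\Omega_\QQ$ even when its endpoints and midpoint lie there. The paper's fix is to solve the infinitesimal problem on the larger domain $\Omega_{4\QQ}$ (this costs only an absolute constant in the size bound), observe the elementary geometric fact that any such segment with data in $\Omega_\QQ$ stays inside $\Omega_{4\QQ}$, and then integrate $-b''(t)$ along the segment. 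Without this step your passage from the Monge--Amp\`ere inequality to (ii) has a gap. Regarding the ODE itself: the paper uses the homogeneity ansatz $\mathfrak B(x,y)=x^{-1}\phi(xy)$ (equivalent to your $yF(\cdot)$ form), separates variables, and lands on an explicit solution in terms of the Lambert $W_0$ function, from which the size bound $\le e\QQ^2 y$ is read off directly.

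Second, the ``lego construction'' in the section title does not refer to the sharpness argument. In the paper it names the device of the \emph{next} theorem: the Bellman function $B^Q$ just built is inserted as a building block into the four-variable function $\cB_Q(F,f,u,v)=F-f^2/(v+a\QQ^{-2}B^Q(u,v))$ to prove the full estimate \eqref{strong}. For sharpness of the present theorem the paper does something far simpler than your extremal-trajectory iteration: it cites the standard example of a power weight with a single singularity (as in \cite{HuTV}), for which a direct computation shows the testing sum is $\gtrsim \QQ^2\langle w^{-1}\rangle_J$. Your iterative construction would also work, but it is considerably more labor than needed here.
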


We introduce the following function of $2$ real variables
\begin{equation}
\label{BelTs}
\Bel(u, v) :=\Bel_Q(u, v) := \sup \frac{1}{|J|} \sum_{I\in D(J)} |\Delta_I w^{-1}|^2 \La w\Ra_I |I| \,,
\end{equation}
where supremum is taken over all $w\in A_2, \QQ\le Q$, such that 
$$
\La w\Ra_J =u, \La w^{-1}\Ra_J = v\,.
$$
Notice that by scaling argument our function does not depend on $J$ but depends on $Q=[w]_{A_2}$. Notice also that function $\Bel_Q$ is 
defined in the domain $O_Q$, where
$$
O_Q:=\{ (u, v) \in \bR^2: 1<uv \le Q\}\,.
$$
Function $\Bel_Q$  is the Bellman function of our problem. In particular, it is very easy to observe that
to prove the estimate in Theorem \ref{t-strongT}  is equivalent to proving $\Bel_Q(u, v) \le A\,Q^2$, and the sharpness in Theorem \ref{t-strongT} is just the claim that
$\sup_{(u, v)\in O_Q} \frac1{v} B_Q(u, v) \ge c>0$.

{\bf Remark.}
Therefore, Theorem  \ref{t-strongT} can be proved by finding the explicit formula for $\Bel_Q$. To do that we obviously need to solve 
an infinite dimensional optimization problem of finding the (almost) best possible $w\in A_2$ (the reader is recalled that it is a dyadic class) such that 
$
\La w\Ra_J =u, \La w^{-1}\Ra_J = v\,.
$
This can be done, and we give this formula in the Addendum. But here we adapt a slightly different approach to proving Theorem \ref{t-strongT}.

\begin{proof}
Below, $A, a$ are positive absolute constants. Instead of finding precisely $\Bel_Q$, we will find another function $B_Q$ such that the following properties are satisfied:

\begin{itemize}
\item $B_Q$ is defined in $O_{4Q}$,
\item $0\le B_Q(u, v) \le A\,Q^2 v$,
\item $B_Q(u, v)-\frac{B_Q(u_+, v_+)+ B_Q(u_-, v_-)}{2} \ge a(v_+ - v_-)^2 u$, if $x=(u,v) \in O_Q, x_\pm = (u_\pm,v_\pm)\in O_Q$, and $x=\frac{x_+ + x_-}{2}$
\end{itemize}

For example, it is not difficult to check that $\Bel_{4Q}$ satisfies all the first and the third properties. We leave it as an exercise to the reader. However, as we said, the second property is not easy, it can be observed when the complicated formula for $\Bel$ is written down (see Addendum). Here we will write down an explicit (and rather easy) form of {\it some} $B_Q$ that satisfies all three properties.

Firs let us observe that if the existence of such a $B_Q$ is proved, the inequality in Theorem \ref{t-strongT} gets proved.
In fact, fix $I\in D(J)$, and introduce $x_I=(\La w\Ra_I, \La w^{-1}\Ra_I)$, $x_{I_\pm}=(\La w\Ra_{I_\pm}, \La w^{-1}\Ra_{I_\pm})$. Of course $\{x_I\}_{I\in D(J)}$ is a martingale. Now we compose this martingale with $B_Q$ (notice that $x_I\in O_Q$, so $B_Q(x_I)$ is well-defined). The resulting object is not a martingale anymore, but it is a super-martingale, moreover, by the third property
$$
\La w\Ra_I |\Delta_I w^{-1}|^2\,|I| \le |I|B_Q(x_I) - |I_+|B_Q(x_{I_+})- |I_-| B_Q(x_{I_-})\,.
$$

Now the reader knows what happens next: we use the telescopic nature of the sum in the right hand side to observe that the summation
in all $I\in D(J)$ cancels all the terms except $|IJB_Q(x_J) $, which, by the second property is at most $A\,Q^2\La w^{-1}\Ra_J |J|$. Hence, we obtained
$$
\sum_{I\in D(J)} |\Delta_I w^{-1}|^2 \La w\Ra_I |I| \le A\,Q^2\La w^{-1}\Ra_J |J|\,,
$$
We leave as an exercise for the reader to explain, where we used the positivity of $B$ in this reasoning.

The last estimate is precisely inequality \eqref{strongT} and Theorem \ref{t-strongT} gets 
proved (apart from the sharpness) as soon as any function $B_Q$ as above is proved to exist.

The construction of  a certain $B_Q$ with above mentioned three properties is split to two steps.

\bigskip

\noindent{\bf Step 1: the reduction to non-linear ODE.} 

\medskip

First of all we wish to find a smooth $B(u, v)$ in the domain
$$
O:= O_Q:=\{ (u,v)>0: 1\le uv \le Q\}\,,
$$
such that the following quadratic forms inequality holds in $O_Q$:
\begin{equation}
\label{sT-infen}
-\frac12 d^2B \ge u(dv)^2 \,.
\end{equation}
We will be searching for homogeneous $B$: $B(u/t, tv) =tB(u, v)$. Hence
$$
B(u,v) = \frac1u \phi(uv)\,
$$
 Then \eqref{sT-infen} becomes
$$
\begin{bmatrix}
x^2 \phi''(x) - 2x\phi'(x) + 2\phi\,, & x\phi''(x)\\
x\phi''(x)\,, & \phi''(x) +2
\end{bmatrix}\le 0\,.
$$
To have this it is enough to satisfy for all $x\in [1, Q]$
\begin{equation}
\label{sT-infenM}
\begin{aligned}
& \phi''(x)+2 \le 0,\, -x\phi'(x) + \phi(x) \le 0,\, 
\\
& \phi''\cdot (-x\phi'+\phi) + x^2 \phi'' -2x\phi' +2\phi =0\,.
\end{aligned}
\end{equation}

The last equation is just making the determinant of our matrix to vanish. Let us start with this equation and put $g=\phi(x)/x$. Then we know that $-x^2g'= \phi-x\phi'\le 0$, so $g$ is increasing.
Also $x g'' + 2g' =\phi'' \le -2$, hence $g'' \le 0$ as $g$ was noticed to be increasing.

 In terms of $g$ we have equation
$$
x(-g'g'' +g'') - 2(g')^2=0\,.
$$
This is a first order non-linear ODE on $h:= g'$ of which we know that $h\ge 0, h'\le 0$:
$$
x(-hh' +h') - 2h^2=0\,.
$$
Variables separate and we get
\begin{align}\label{lambert}
\frac{1-h}{h^2} h' = \frac2{x}\,.
\end{align}
The condition we saws that $h'=g''$ is negative and $x$ here is positive, so $h\geq 1$, and the condition $\phi-x\phi'\leq 0$ is the same as $h\geq0$. Thus any solution $h\geq 1$ of  (\ref{lambert}) gives the desired result.

We want to solve this for $x\in [1, Q]$:
\begin{align*}
-\log h - \frac1h = 2 \log x +c \quad \Leftrightarrow \quad -\frac{1}{h}e^{-\frac{1}{h}}=-x^{2}C, \quad C>0.
\end{align*}

Notice that Lambert $W$ function (which is multivalued) solves the equation $z=W(z)e^{W(z)}$. Thus we must have $W(-x^{2}C)=-\frac{1}{h(x)}$. The condition $-1\leq  -\frac{1}{h(x)}\leq  0$ requires that $-1\leq W \leq 0$, and this gives single-valued function $W_{0}(y)$ defined on the interval $[-1/e, 0]$ such that $W_{0}(-1/e)=-1$, $W_{0}(0)=0$ and $W_{0}(y)$ is increasing. So $h(x)=-\frac{1}{W_{0}(-x^{2}C)}$. The condition $-1/e\leq-x^{2}C\leq 0$ for $x\in [1,Q]$ gives the range for constant $C$ i.e.,  $ 0<C\leq \frac{1}{Q^{2}e}$.  Going back to the functions $\phi$ and $B$ we obtain:
\begin{align*}
& \varphi(x)=-x\int_{1}^{x} \frac{dt}{W_{0}(-t^{2}C)} +x\varphi(1) 
\\
& \text{and thus}\quad B(u,v)=-v\int_{1}^{uv} \frac{dt}{W_{0}(-t^{2}C)} +v\varphi(1).
\end{align*}

Let us also see how bounded is $B$. Choosing $C=\frac{1}{Q^{2}e}$ gives minimal $B$.   $B(u,v)$   can be assumed to be $0$ if $uv=1$. Hence $\phi(1)=0$. Then
\begin{equation}
\label{grow-strongT}
\begin{aligned}
& B^Q(u, v):=B(u,v) =-v\int_{1}^{uv}\frac{dt}{W_{0}(-\frac{t^{2}}{Q^{2}e})} \leq 
\\
& v\int_{1}^{uv}\frac{Q^{2}e}{t^{2}}=e Q^{2}v\left(1-\frac{1}{uv} \right), 1\le uv \le Q\,.
\end{aligned}
\end{equation}
Here we used the fact that Lambert function $W_{0}(x)\leq x$ for $x\in [-1/e,0]$. Actually one can get better estimates by using the series expansion for $W_{0}$ i.e., 
$$
W_{0}(x) = \sum_{n=1}^{\infty}\frac{(-n)^{n-1}}{n!}x^{n}, \quad |x| < \frac{1}{e}. 
$$

\bigskip

\noindent{\bf Step 2: from infinitesimal inequality on $d^2B^Q$ to  global concavity property of $B^Q$. }

\medskip

 The function $B^Q$ defined in \eqref{grow-strongT}  is not function $B_Q$ with three properties formulated at the beginning of the proof of this theorem.
However, let us prove that $B_Q:=B^{4Q}$ has all these three properties. The first property is just by definition, and the second property is because we just proved in \eqref{grow-strongT} that 
$$
B_Q(u, v)=B^{4Q}(u, v) \le 4e Q^2 v\,.
$$

To prove the third property let us fix $x=(u,v)\in O_Q$,  $x_\pm = (u_\pm, v_\pm)\in O_Q$ such that $x=\frac{x_+ + x_-}{2}$. Introduce two functions defined on $[-1,1]$:
$$
U(t) = \frac{1+t}2 u_+ + \frac{1-t}2 u_-,\,\,V(t) = \frac{1+t}2 v_+ + \frac{1-t}2 v_-\,.
$$
Compose the vector function $X(t)=(U(t), V(t)$ and function $B_Q=B^{4Q}$, namely, put
$$
b(t):= B_Q(X(t))\,.
$$
Then $X(\pm 1) = x_\pm, X(0)= x$. It is important to notice that $b$ is well-defined because
$$
\forall t\in [-1,1]\,\,\,X(t)\in O_{4Q}\,.
$$
The latter is an elementary geometric observation saying that if three points $X_\pm, X$ belong to $O_Q$, and $X=\frac{X_+ +X_-}{2}$, then the whole segment with end points $X_\pm$ lie in $O_{4Q}$. (But not, in general, on $O_{Q'}$ with $Q'< 4Q$.)
Now we differentiate twice function $b$. The chain rule gives us immediately
$$
b''(t) = (H_{B_Q}(X(t)) (x_+ - x_-), (x_+ - x_-))_{\bR^2}\,,
$$
where $H_{B_Q}$ denotes as always the Hessian matrix of function $B_Q$. Therefore, the use of \eqref{sT-infen} gives us
$$
-b''(t) \ge 2 U(t) (v_+ - v_-)^2\,.
$$
On the other hand,
$$
B_Q(x)-\frac{B_Q(x_+)+ B_Q(x_-)}{2} = b(0)- \frac{b(1)+ b(-1)}{2} = \frac12\!\!\int_{-1}^1\!\!\!\!\! -b''(t) (1-|t|)\, dt
$$
Notice that the integrand is always nonnegative by the previous display formula. By the same formula, the integrand is at least $u=U(0)$ for $t\in [0, 1/2]$ because on this interval $U(t)\ge \frac12 U(0)$ by the obvious geometric reason. Combing that we obtain
$$
B_Q(x)-\frac{B_Q(x_+)+ B_Q(x_-)}{2} \ge \frac38 u(v_+ - v_-)^2\,.
$$
We established all three properties for $B_Q$, and we have already shown that this is enough to prove the inequality in Theorem \ref{t-strongT}.
The sharpness is not difficult to see for a weight with one singular point, see \cite{HuTV} for example.
\end{proof}

\bigskip

\subsection{Proving the instance of $T1$ theorem using its Bellman function.}
\label{T1Bel}

\begin{theorem}
\label{t-strong}
$C_{s} \le A Q^2$ and this estimate is sharp.
\end{theorem}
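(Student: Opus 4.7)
Following the Bellman template of Theorem~\ref{t-strongT}, I introduce the four variables $u=\langle w\rangle_J$, $v=\langle w^{-1}\rangle_J$, $f=\langle \varphi w^{-1}\rangle_J$, $F=\langle\varphi^{2}w^{-1}\rangle_J$, subject to $1\le uv\le Q$ (the dyadic $A_{2}$ bound) and $f^{2}\le Fv$ (Cauchy--Schwarz, since $\langle \varphi w^{-1}\rangle^{2}\le\langle \varphi^{2}w^{-1}\rangle\langle w^{-1}\rangle$). These cut out the domain $\Omega_Q=\{(u,v,f,F)>0:\ 1\le uv\le Q,\ f^{2}\le Fv\}$. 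I aim to construct a smooth $B_Q$ on $\Omega_{4Q}$ enjoying (i) $0\le B_Q(u,v,f,F)\le AQ^{2}F$, and (ii) the midpoint concavity $B_Q(x)-\tfrac12(B_Q(x_+)+B_Q(x_-))\ge a\,u\,(f_+-f_-)^{2}$ for every admissible triple $x,x_\pm\in\Omega_Q$ with $x=(x_++x_-)/2$. Once such a $B_Q$ is at hand, the martingale telescoping is identical to that of Theorem~\ref{t-strongT}: compose $B_Q$ with the martingale $x_I=(\langle w\rangle_I,\langle w^{-1}\rangle_I,\langle\varphi w^{-1}\rangle_I,\langle\varphi^{2}w^{-1}\rangle_I)$, apply~(ii) at every split (with $(f_{I_+}-f_{I_-})^{2}=4|\Delta_I(\varphi w^{-1})|^{2}$), sum telescopically over $I\in D(J)$, and bound the boundary term by $AQ^{2}\langle\varphi^{2}w^{-1}\rangle_J|J|$ via~(i). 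This delivers~\eqref{strong} with $C_s\le AQ^{2}$.

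As in Step~2 of Theorem~\ref{t-strongT}, property~(ii) is implied by the infinitesimal Hessian inequality $-\tfrac12 d^{2}B_Q\ge u\,(df)^{2}$ on $\Omega_Q$; the enlargement $\Omega_Q\to\Omega_{4Q}$ absorbs the same geometric loss, namely that the midpoint of two points of $\Omega_Q$ only guarantees that the joining segment lies in $\Omega_{4Q}$. Thus the real task is to exhibit a smooth $B_Q$ on $\Omega_{4Q}$ whose $4\times 4$ Hessian in $(u,v,f,F)$ is negative semidefinite with a strict $-2u$ component in the pure $f$-direction.

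For the construction I would exploit the scaling symmetry $(u,v,f,F)\mapsto(u/t,tv,tf,tF)$ and seek $B_Q$ of the form $B_Q(u,v,f,F)=\tfrac{1}{u}\Psi(x,y,z)$ with $x=uv$, $y=uf$, $z=uF$, reducing matters to a three-variable function on $\{1\le x\le 4Q,\ y^{2}\le xz\}$. A natural ansatz is $\Psi(x,y,z)=\alpha(x)\,z+\beta(x)\,(z-y^{2}/x)$, where $z-y^{2}/x=u(F-f^{2}/v)$ is the scaled Cauchy--Schwarz defect (automatically nonnegative on the domain). The coefficient $\alpha$ is expected to satisfy an ODE of the same Lambert type as~\eqref{lambert}, yielding growth $\alpha(x)\lesssim Q^{2}$ on $[1,4Q]$ exactly as in Section~\ref{CsT}; $\beta$ is then tuned so that $-B_{ff}\ge 2u$ holds and the mixed $f$-derivatives of $B_Q$ cancel.

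The main obstacle is reconciling this $f$-concavity with negative semidefiniteness of the full Hessian: naively chosen $\alpha,\beta$ overproduce mixed terms $B_{uf},B_{vf}$ that violate the matrix inequality. I expect this to be handled by the same device used in Section~\ref{CsT}---accept some slack in the ODE and absorb it by enlarging $Q\mapsto 4Q$---together with a careful choice of $\beta$. Sharpness $C_s\gtrsim Q^{2}$ is free: specializing to $\varphi\equiv 1$ reduces~\eqref{strong} to~\eqref{strongT}, so the sharp lower bound for $C_{s,T}$ established in Theorem~\ref{t-strongT} transfers immediately to $C_s$.
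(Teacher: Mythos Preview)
Your plan is a sound outline of a direct four-variable Bellman approach, but it is not a proof: the heart of the matter---the existence of a smooth $B_Q$ on $\Omega_{4Q}$ with $0\le B_Q\le AQ^{2}F$ and $-\tfrac12\,d^{2}B_Q\ge u\,(df)^{2}$---is left as an ansatz whose ``main obstacle'' you explicitly do not resolve. Concretely, your proposed form $B_Q=\alpha(uv)F+\beta(uv)(F-f^{2}/v)$ forces $B_{FF}=0$, so negative semidefiniteness of the full Hessian already requires $B_{uF}=B_{vF}=0$, i.e.\ $\alpha+\beta\equiv\text{const}$; after that one still has to verify a nontrivial $3\times3$ matrix inequality for $-\beta(uv)f^{2}/v$ while keeping $\beta(s)\ge s$ on $[1,4Q]$. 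None of this is checked, and it is exactly the place where such constructions typically fail or require a different ansatz. As written, the proposal is a program, not a proof.

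The paper takes a genuinely different route that sidesteps this construction. Instead of demanding concavity in the $f$-direction, it first decomposes $h_I=\alpha_I^{w^{-1}}h_I^{w^{-1}}+\beta_I^{w^{-1}}\1_I/\sqrt{|I|}$ (Lemma~\ref{haarF}), splitting the sum in~\eqref{strong} into $\Sigma_1+\Sigma_2$. The piece $\Sigma_1$ is bounded by $8Q\|\varphi\|_{L^2(w^{-1})}^2$ trivially, by orthonormality of $\{h_I^{w^{-1}}\}$ in $L^2(w^{-1})$. The piece $\Sigma_2$ becomes the weighted Carleson embedding $\sum_I\langle\varphi\rangle_{I,w^{-1}}^{2}\gamma_I|I|$ with $\gamma_I=(\Delta_I w^{-1})^2\langle w\rangle_I$, so the relevant concavity is in the $v$-direction, not the $f$-direction. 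For this the paper writes down an explicit ``lego'' Bellman function $\mathcal B_Q(F,f,u,v)=F-\dfrac{f^{2}}{v+aQ^{-2}B^{Q}(u,v)}$, with $B^{Q}$ taken verbatim from Theorem~\ref{t-strongT}; the chain rule (Lemma~\ref{chaind2}) and the already-established bounds $B^{Q}\le AQ^{2}v$, $-d^{2}B^{Q}\ge u(dv)^{2}$ immediately yield $-d^{2}\mathcal B_Q\ge aQ^{-2}\tfrac{f^{2}}{v^{2}}u(dv)^{2}$, which is exactly what the telescoping needs. Your sharpness argument via $\varphi\equiv1$ is correct and matches the paper.
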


Let us deduce this result from Theorem \ref{t-strongT}.
This the occasion of the so-called weighted $T1$ theorem.

We use the notation $h_I$ for a standard Haar function supported on dyadic interval $I$, it is given by
$$
h_I=\begin{cases}
\frac1{\sqrt{I}},\, \text{on}\,\, I+\\
-\frac1{\sqrt{I}},\, \text{on}\,\, I-\,.
\end{cases}
$$
It is an orthonormal basis in unweighted $L^2$. Now consider the same type of Haar basis but in weighted $L^2(w^{-1})$: functions $h_I^{w^{-1}}$ are orthogonal to constants in $L^2(w^{-1})$, normalized in $L^2(w^{-1})$, assume constant value on each child of $I$, and are supported on $I$. For dyadic intervals on the line we get

We need a couple of lemmas.
\begin{lemma}
\label{haarF}
The following holds $h_I = \alpha_I^{w^{-1}} h_I^{w^{-1}} + \beta_I^{w^{-1}} \frac{\1_I}{\sqrt I}$ with
\begin{equation}
\label{albe}
\alpha_I^{w^{-1}} =\frac{\La w^{-1}\Ra^{1/2}_{I+}\La w^{-1}\Ra^{1/2}_{I-}}{\La w^{-1}\Ra^{1/2}_{I}}\,,\, \beta_I^{w^{-1}}  = \frac{\La w^{-1}\Ra_{I+}-\La w^{-1}\Ra_{I-}}{\La w^{-1}\Ra_I}
\end{equation}
\end{lemma}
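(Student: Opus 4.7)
The plan is to solve for the two unknown coefficients $\alpha_I^{w^{-1}}$ and $\beta_I^{w^{-1}}$ directly from the defining properties of $h_I^{w^{-1}}$. First I would make explicit the weighted Haar function itself: since $h_I^{w^{-1}}$ is supported on $I$ and is constant on each child, write $h_I^{w^{-1}}=a\1_{I_+}+b\1_{I_-}$. The orthogonality to constants in $L^2(w^{-1})$ gives
\[
a\La w^{-1}\Ra_{I_+}+b\La w^{-1}\Ra_{I_-}=0,
\]
(using $|I_+|=|I_-|=|I|/2$ to cancel common factors), and the $L^2(w^{-1})$ normalization gives $a^2\La w^{-1}\Ra_{I_+}\tfrac{|I|}{2}+b^2\La w^{-1}\Ra_{I_-}\tfrac{|I|}{2}=1$. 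Solving this $2\times 2$ nonlinear but essentially algebraic system yields explicit expressions for $a$ and $b$ in terms of the averages $\La w^{-1}\Ra_{I_\pm}$ and $\La w^{-1}\Ra_I=(\La w^{-1}\Ra_{I_+}+\La w^{-1}\Ra_{I_-})/2$.

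Next I would impose the proposed decomposition $h_I=\alpha h_I^{w^{-1}}+\beta\,\1_I/\sqrt{|I|}$ and evaluate on the two children of $I$ using $h_I=\tfrac{1}{\sqrt{|I|}}(\1_{I_+}-\1_{I_-})$. This produces the linear system
\[
\alpha a+\frac{\beta}{\sqrt{|I|}}=\frac{1}{\sqrt{|I|}},\qquad \alpha b+\frac{\beta}{\sqrt{|I|}}=-\frac{1}{\sqrt{|I|}},
\]
so that adding and subtracting gives $\alpha(a-b)=2/\sqrt{|I|}$ and $\alpha(a+b)=-2\beta/\sqrt{|I|}$. Substituting the values of $a$ and $b$ from the first step and simplifying using $\La w^{-1}\Ra_{I_+}+\La w^{-1}\Ra_{I_-}=2\La w^{-1}\Ra_I$ should deliver exactly the two formulas \eqref{albe}.

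This lemma is a standard change-of-basis calculation, and no step poses a real obstacle. The only thing that needs care is bookkeeping with the normalization constants and the convention for $\Delta_I w^{-1}$ (i.e.\ whether the martingale difference is defined as $\La w^{-1}\Ra_{I_+}-\La w^{-1}\Ra_{I_-}$ or as half of that), so that the coefficient $\beta_I^{w^{-1}}$ matches the formula as stated. Once the two reductions above are carried out, both identities in \eqref{albe} drop out mechanically.
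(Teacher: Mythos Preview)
Your approach is correct and is exactly what the paper does: it labels the argument ``a direct calculation'' based on the two defining conditions $\|h_I^{w^{-1}}\|_{L^2(w^{-1})}=1$ and $(h_I^{w^{-1}},\1)_{L^2(w^{-1})}=0$, which is precisely the system you set up and solve. Your caveat about the factor of $2$ is well placed: carrying out your computation yields $\beta_I^{w^{-1}}=\dfrac{\La w^{-1}\Ra_{I_+}-\La w^{-1}\Ra_{I_-}}{2\La w^{-1}\Ra_I}$, so the stated formula for $\beta_I^{w^{-1}}$ is off by a harmless factor of $\tfrac12$ (this does not affect any subsequent estimate in the paper, where only $(\beta_I^{w^{-1}})^2\lesssim |\Delta_I w^{-1}|^2/\La w^{-1}\Ra_I^2$ is used).
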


\begin{proof}
This is a direct calculation, we need to define two constants $\alpha_I^{w^{-1}},  \beta_I^{w^{-1}}$ and we have two conditions: $\|h_I^{w^{-1}}\|_{L^2(w^{-1})} =1$ and $(h_I^{w^{-1}}, \1)_{L^2(w^{-1})} =0$. 
\end{proof}

The second lemma is just the instance of the chain rule. 
\begin{lemma}
\label{chaind2}
Let $\Phi(x'), \!B(x'')$  be smooth functions of $x'\!=\!\!(x_1,\!\dots, \!x_n,\!x_0)$, $x''=(x_{n+1},\dots, x_m)$. Then we compute the second differential form of the composition function 
$$
\cB(x_1,\dots,x_n, x_{n+1}, \dots, x_m)= \Phi(x_1,\dots, x_n, B(x_{n+1}, \dots, x_m))
$$
 by the following formula:
 $$
 d^2 \cB = d^2\Phi + \frac{\pd \Phi}{\pd x_0} d^2 B\,.
 $$
 \end{lemma}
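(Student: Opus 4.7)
The plan is to verify this by direct computation with the chain rule, organizing the terms so that the two pieces of the right-hand side emerge naturally. Write $x'' = (x_{n+1}, \dots, x_m)$ and abbreviate $\Phi_i = \partial\Phi/\partial x_i$ for $i \in \{0,1,\dots,n\}$, $B_j = \partial B/\partial x_j$ for $j \in \{n+1,\dots,m\}$.

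First I would compute the first partials of $\mathcal{B}$: for $i \le n$ one has $\partial_i\mathcal{B} = \Phi_i$, while for $j \ge n+1$ one has $\partial_j\mathcal{B} = \Phi_0 \cdot B_j$. Differentiating again and using that $\Phi_i$ itself depends on $x''$ only through the $x_0$-slot occupied by $B$, I get three blocks of second partials:
\begin{align*}
\partial_i\partial_k \mathcal{B} &= \Phi_{ik}, && i,k \le n,\\
\partial_i\partial_j \mathcal{B} &= \Phi_{i0}\, B_j, && i\le n,\ j\ge n+1,\\
\partial_j\partial_l \mathcal{B} &= \Phi_{00}\, B_j B_l + \Phi_0\, B_{jl}, && j,l\ge n+1.
\end{align*}
Assembling these into $d^2\mathcal{B}$ and using the shorthand $dB = \sum_{j\ge n+1} B_j\, dx_j$, the three blocks regroup exactly as
$$
d^2 \mathcal{B} \;=\; \Bigl[\sum_{i,k\le n}\Phi_{ik}\,dx_i dx_k + 2\sum_{i\le n}\Phi_{i0}\, dx_i\, dB + \Phi_{00}(dB)^2\Bigr] + \Phi_0 \sum_{j,l\ge n+1} B_{jl}\, dx_j dx_l.
$$
The bracketed expression is precisely $d^2\Phi$ evaluated with $dx_0$ formally replaced by $dB$ (which is the way $d^2\Phi$ should be read after the substitution $x_0 = B(x'')$), and the remaining term is $\Phi_0\, d^2 B$.

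There is no real obstacle here; the content of the lemma is the clean separation of the Hessian of a composition into a ``$d^2\Phi$'' piece (a quadratic form in $dx_1,\dots,dx_n,dB$) and a single correction $\tfrac{\partial\Phi}{\partial x_0}\, d^2 B$ coming from the fact that the outer function sees the inner function linearly in its $x_0$-slot when we take one derivative, so only one extra Hessian term of $B$ appears. The only thing to be a little careful about is the bookkeeping of the mixed term $\Phi_{i0}\,dx_i\,dB$, which accounts for cross-partials between the ``direct'' variables $x_1,\dots,x_n$ and the ``inner'' variables $x_{n+1},\dots,x_m$.
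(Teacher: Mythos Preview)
Your computation is correct and is exactly the chain-rule verification the paper has in mind; the paper itself does not spell out a proof but simply declares the lemma ``an instance of the chain rule'' and then records in the Remark how $d^2\Phi$ is to be read (with $dy=(dx_1,\dots,dx_n,dB)$). Your block-by-block Hessian calculation fills in precisely those details and matches the paper's intended interpretation.
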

 
 {\bf Remark.}
 We understand the left hand side as $(H_{\cB}(x) dx, dx)_{\bR^m}$, where
 $$
 x:=(x_1,\dots,x_n, x_{n+1}, \dots, x_m),\,\, dx= (dx_1,\dots, dx_n, dx_{n+1}, \dots, dx_m)\,.
 $$
 We understand $d^2\Phi$ in the right hand side as $(H_{\Phi} (x_1, \dots, x_n, B(x'')) dy, dy)_{\bR^{n+1}}$, where
 $$
 dy= (dx_1, \dots, dx_n, dB),\,\, dB= (\nabla B(x''), dx'')_{\bR^{m-n}}\,.
 $$

 Now we are ready to prove Theorem \ref{t-strong}.
 
 \begin{proof}
 The sum we want to estimate in \eqref{strong}
$$
\frac1{|J|}\sum_{I\in D(J)} (\La \vf w^{-1}\Ra_{I+}- \La \vf w^{-1}\Ra_{I-})^2\La w\Ra_I |I|
$$
is of course
$$
\Sigma=  \frac2{|J|}\sum_{I\in D(J)} (\vf w^{-1}, h_I)^2 \La w\Ra_I.
$$
We can plug the decomposition of Lemma \ref{haarF} and take into account that for dyadic lattice  obviously
$\alpha_I^{w^{-1}} \le 2 \La w^{-1}\Ra^{1/2}_I$. Then we obtain
\begin{align*}
& \Sigma \le \frac8{|J|}\sum_{I\in D(J)} (\vf w^{-1}, h_I^{w^{-1}})^2\La w^{-1}\Ra_I \La w\Ra_I +
\\
& \frac2{|J|}\sum_{I\in D(J)}\La \vf w^{-1}\Ra_I^2 (\beta_I^{w^{-1}})^2 \Ra_I \La w\Ra_I |I|=: \Sigma_1 + \Sigma_2\,.
\end{align*}
The system $\{h_I^{w^{-1}}\}_{I\in D(J)}$ is orthonormal in $L^2(w^{-1})$, and $\La w^{-1}\Ra_I \La w\Ra_I \le Q$. Hence, immediately we have
\begin{equation}
\label{Sigma1}
\Sigma_1 \le 8Q \|f\|^2_{L^2(w^{-1})}\,.
\end{equation}
We are left to estimate $\Sigma_2$. 
To do that let us rewrite $\Sigma_2$:
$$
\Sigma_2= \frac2{|J|}\sum_{I\in D(J)}\left(\frac{\La \vf w^{-1}\Ra_I}{\La w^{-1}\Ra_I}\right)^2 \gamma_I |I|=\frac2{|J|}\sum_{I\in D(J)}\left(\La \vf w^{-1}\Ra_{I, w^{-1}}\right)^2 \gamma_I |I| \,,
$$
where $\La \cdot\Ra_{I, w^{-1}}$ means the average with respect to measure $\mu:=w^{-1}(x)dx$ and
$$
\gamma_I:= \left(\La w^{-1}\Ra_{I+}-\La w^{-1}\Ra_{I-}\right)^2\La w\Ra_I\,.
$$

We are going to prove now that with some absolute constant $A$
\begin{equation}
\label{embSq}
\frac1{|J|}\sum_{I\in D(J)}\left(\La \vf w^{-1}\Ra_{I, w^{-1}}\right)^2 \gamma_I |I| \le A Q^2\La \vf^2 w^{-1}\Ra_J\,.
\end{equation}
This of course finishes the proof of Theorem \ref{t-strong}.

To prove \ref{embSq} we will construct a special function of $4$ real variables $\cB (X)=\cB_Q(X), X=(F, f, u, v),$ that possesses the following properties

\begin{itemize}
\item 1) $\cB_Q$ is defined in a non-convex domain $O_{4Q}$, where $O_Q:=\{(F, f, u, v)\in \bR_+^4: f^2 < Fv, \, 1<uv <Q\}$,
\item 2)  $0 \le \cB_Q \le F$,
\item 3)  $\cB_Q (X) -\frac{\cB_Q(X_+)+ \cB_Q(X_-)}{2} \ge aQ^{-2}\frac{f^2}{v^2} u (v_+ - v_-)^2$, where $X=(F, f, u, v), X_\pm =(F_\pm, f_\pm, u_\pm, v_\pm)$ belong to $O_Q$, and $X=\frac{X_+ +X_-}{2}$.
 \end{itemize}

As soon as such a function constructed \eqref{embSq} and Theorem \ref{t-strong} follow immediately. In fact we repeat our telescopic consideration. We set the vector martingale 
$X_I:= ( F_I, f_I, u_I, v_I)$, where
$$
F_I = \La \vf w^{-1}\Ra_I,\, f_I =\La \vf w^{-1}\Ra_I, u_I=\La w\Ra_I, \, v_I=\La w^{-1}\Ra_I\,.
$$
It is obvious that vector martingale $\{X_I\}_{I\in D(J)}$ is always inside $O_Q$, and 
so the superposition of this martingale and $\cB_Q$ is well defined: $\cB_Q(X_I)$. 
Then, the property 3) claims that $\{\cB_Q(X_I)\}_{I\in D(J)}$ is a super-martingale, and moreover
$$
|I| \frac{f_I^2}{v_I^2} u_I (v_{I_+} - v_{I_-})^2 \le AQ^2 ( |I|\cB(X_I) - |I_+|\cB(X_{I_+}) - |I_+|\cB(X_{I_-}))
$$
We use the telescopic nature of the term in the right hand side, and summing these terms for all $I\in D(J)$, we then notice that all of them will cancel each other, except
$AQ^2  |J|\cB(X_J)$, which is bounded by $AQ^2 |J|F_J= AQ^2|J| \La \vf^2 w^{-1}\Ra_J$. We proved \eqref{embSq} provided that the existence of $\cB_Q$ is validated.

Now we will write the explicit formula for $\cB_Q$. Exactly as in Theorem \ref{t-strongT} we first construct, by an explicit formula, an auxiliary function $\cB^Q$.
Here it is
\begin{equation}
\label{cBQ}
\cB^Q(F, f, u, v) := F- \frac{f^2}{ v+ aQ^{-2} B^Q(u, v)}\,,
\end{equation}
where $B^Q$ was defined in \eqref{grow-strongT}. It is clear that it satisfies property 2).  
It  ``almost" satisfies property 1), but it is defined only in $O_Q$, not in a larger domain $O_{4Q}$. 
As to the property 3) it does satisfy its infinitesimal version: at point $X=(F, f, u, v)\in O_Q$
\begin{equation}
\label{BQ-diff}
-d^2B^Q \ge aQ^{-2} \frac{f^2}{v} u (dv)^2\,.
\end{equation}
Let us prove this.
This follows from Lemma \ref{chaind2}. In fact,consider $\Phi(x_1, x_2, x_3, x_0):= x_1 -\frac{x_2^2}{x_3+x_0}$. 
By a direct simple calculation one can see that it is concave in $\bR_+^4$, so $d^2\Phi\le 0$. Now we see that 
$$
\cB^Q (F, f, u, v) = \Phi(F, f, v, aQ^{-2} B^Q(u, v))\,,
$$
and by Lemma \ref{chaind2}
$$
d^2\cB^Q = d^2\Phi + aQ^{-2}\frac{\pd \Phi}{\pd x_0} \cdot (d^2 B^Q)\le \frac{a}{Q^2} \frac{f^2}{(v+ aQ^{-2} B^Q(u, v))^2} (d^2 B^Q)\,.
$$
But in Theorem \ref{t-strongT} we proved that $B^Q \le AQ^2 v$, hence, 
choosing small absolute constant $a$ we guarantee that $v+ aQ^{-2} B^Q(u, v) \le 2v$. 
We also proved in Theorem \ref{t-strongT} that $-d^2B^Q \ge u (dv)^2$. Combining these facts with the last display inequality we obtain
$$
-d^2\cB^Q  \ge \frac{a}{4Q^2}\frac{f^2}{v^2}u (dv)^2\,,
$$
which is precisely \eqref{BQ-diff}. 

We need function with property 3) and defined in the domain $O_{4Q}$. So let us put $\cB_Q:= \cB^{4Q}$. 
Exactly as before as in Step 2 of Theorem \ref{t-strongT}  we can prove now that not only infinitesimal 
special concavity holds in the form $-d^2\cB_Q  \ge \frac{a}{64Q^2}\frac{f^2}{v^2}u (dv)^2$, but also  we have with some small positive $a_0$
\begin{equation}
\label{BQ-discr}
\cB_Q (X) -\frac{\cB_Q(X_+)+ \cB_Q(X_-)}{2} \ge a_0Q^{-2}\frac{f^2}{v^2} u (v_+ - v_-)^2\,,
\end{equation}
for all triple of points $(X, X_+, X_-)\in O_Q^3$ such that $X=(F, f, u, v), X_\pm =(F_\pm, f_\pm, u_\pm, v_\pm)$, and  $X=\frac{X_+ +X_-}{2}$.
 \end{proof}

 {\bf Remark.}
 \label{T1principle}
 The constant in the right hand side of the main inequality \eqref{embSq}
 is $AQ^2$.
 It is important to notice  that the constant in the right hand side of \eqref{embSq}  depends only on the constant in Theorem \ref{t-strongT}. 
 
 In fact, these two constants are equal up to an absolute constant. So if, for example, the constant in Theorem \ref{t-strongT} is denoted by $K$, then the constant in \eqref{embSq}  becomes $AK$, and the constant in Theorem \ref{t-strong} would also become $AK$ (with another absolute constant $A$). This principle of passing from uniform  testing conditions (Theorem \ref{t-strongT}) to full boundedness of the operator is quite general, and, as we said already,  is often called $T1$ theorem. 
 
 Here this principle was proved by showing that {\it the formula} that proves Theorem \ref{t-strong} can be obtained by totally formal manipulations from {\it the formula} that prove testing condition (Thorem \ref{t-strongT}). See the next remark.

 {\bf  Remark.}
 \label{lego-r}
 The reader should pay attention to a  following curious formula.
  \begin{equation}
 \label{lego}
 \cB_Q(F, f, u, v) = F-\frac{f^2}{v+ aQ^{-2} B^Q(u, v)}\,,
 \end{equation}
 This is the function $\cB_Q$  that proves Theorem \ref{t-strong}, and
$B^Q$ is the function that proved Theorem \ref{t-strongT}. 

So we see another instance of transference by use of Bellman function. By formula \eqref{lego} we transfer the  claim of Theorem \ref{t-strongT} to Theorem \ref{t-strong}. A Bellman function of Theorem  \ref{t-strongT}  was used as ``a lego piece" to construct  a Bellman function for Theorem \ref{t-strong}. In this instance this ``lego construction" proved for us the $T1$ theorem for the weighted square function operator.

\bigskip

\section{A small sharpening of the $T1$ theorem for dyadic square function}
\label{ning}

Recall that
$$
\gamma_I = (\La w^{-1}\Ra_{I_+} - (\La w^{-1}\Ra_{I_-})^2 \La w\Ra_I\,.
$$

A natural question arises: Let $[w]_{A_2}= Q>>1$ and  at the same time
\begin{equation}
\label{T1cond}
\frac1{|J|}\sum_{I\in \cD(J)} \gamma_I |I| \le q^2 \La w^{-1}\Ra_J,\,\,\, \forall J\in \cD,
\end{equation}
where $q<<Q$, does this mean that the norm $\|S_{w^{-1}}: L^2(w^{-1})\to L^2(w)\|$ is bounded by $Cq$?
That would be quite expected because this statement reminds the statement of the so-called $T1$ theorems.
In fact, \eqref{T1cond} is precisely the testing condition and can be rewritten as
\begin{equation}
\label{T1cond1}
\|S_{w^{-1}}\chi_J\|_w^2 \le q^2 \|\chi_J\|_{w^{-1}}^2,\,\,\, \forall J\in \cD.
\end{equation}
We are quite sure that this is wrong. If so the next natural question is the following: if, however, $q<<Q$, is it true that
one can give a better estimate than the one we proved in the previous section:
$$
\|S_{w^{-1}}: L^2(w^{-1})\to L^2(w)\| \lesssim  Q\,?
$$

The answer to this question is positive. We have the following result.
\begin{theorem}
\label{qQ}
Let $w\in A_2$ (dyadic as always), let  $[w]_{A_2}= Q$, and let \eqref{T1cond} holds with $q<<Q$, then we have the following improved estimate on $\|S_{w^{-1}}: L^2(w^{-1})\to L^2(w)\| $:
$$
\|S_{w^{-1}}: L^2(w^{-1})\to L^2(w)\|  \le C\,(Q^{1/2} + q)\,.
$$
\end{theorem}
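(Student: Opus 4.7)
The plan is to revisit the Haar decomposition $\Sigma = \Sigma_1+\Sigma_2$ that appears in the proof of Theorem \ref{t-strong} and to replace the Bellman ``lego'' step for $\Sigma_2$ with a direct application of the dyadic weighted Carleson embedding theorem; the testing hypothesis \eqref{T1cond} provides precisely the right Carleson bound. The relevant measure is $\mu := w^{-1}dx$, and \eqref{T1cond} reads $\sum_{I\in \cD(J)}\gamma_I |I|\le q^2 \mu(J)$, i.e.\ $\{\gamma_I |I|\}$ is a $\mu$-Carleson sequence with Carleson constant $q^2$.

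Concretely, by Lemma \ref{haarF} we write $h_I=\alpha_I^{w^{-1}}h_I^{w^{-1}}+\beta_I^{w^{-1}}\frac{\1_I}{\sqrt{|I|}}$; expanding the square in $\Sigma$ gives $\Sigma\le \Sigma_1+\Sigma_2$ exactly as in the proof of Theorem \ref{t-strong}. The bound $\Sigma_1\le 8Q\,\La \vf^2 w^{-1}\Ra_J$ established there uses only the pointwise $A_2$-inequality $\La w\Ra_I\La w^{-1}\Ra_I\le Q$ together with the $L^2(w^{-1})$-orthonormality of $\{h_I^{w^{-1}}\}$; this contributes the $Q^{1/2}$ part of the final operator norm. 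For $\Sigma_2$, since $\La \vf w^{-1}\Ra_{I,w^{-1}} = \La \vf\Ra_{I,\mu}$ by definition of $\mu$, we have
$$
\Sigma_2 = \frac{2}{|J|}\sum_{I\in \cD(J)} \bigl(\La \vf\Ra_{I,\mu}\bigr)^2 \gamma_I |I|.
$$
The dyadic weighted Carleson embedding theorem applied to the $\mu$-Carleson sequence $\{\gamma_I |I|\}$ yields
$\sum_{I\in \cD(J)}\bigl(\La \vf\Ra_{I,\mu}\bigr)^2\gamma_I |I|\le 4q^2\int_J \vf^2\,d\mu$,
hence $\Sigma_2\le 8q^2\,\La \vf^2 w^{-1}\Ra_J$.

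Adding the two contributions yields $\Sigma\le C(Q+q^2)\,\La \vf^2 w^{-1}\Ra_J$, which is equivalent to the announced bound $\|S_{w^{-1}}:L^2(w^{-1})\to L^2(w)\|\le C'(Q^{1/2}+q)$. There is no essential obstacle: the entire lego construction of Theorem \ref{t-strong} was needed only to control $\Sigma_2$ by $Q^2$ in the absence of any testing information, whereas under \eqref{T1cond} the Carleson embedding delivers the much sharper $q^2$ bound essentially for free. The only point requiring a brief verification is that \eqref{T1cond} translates verbatim into the $\mu$-Carleson condition on the masses $\{\gamma_I |I|\}$, which is immediate from $\La w^{-1}\Ra_J |J| = \mu(J)$.
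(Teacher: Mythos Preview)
Your proof is correct, and both you and the paper follow the identical skeleton: keep the decomposition $\Sigma\le\Sigma_1+\Sigma_2$ from the proof of Theorem~\ref{t-strong}, retain the bound $\Sigma_1\le 8Q\,\La\vf^2 w^{-1}\Ra_J$ verbatim, and upgrade only the estimate on $\Sigma_2$ using the testing hypothesis~\eqref{T1cond}. The difference is in how $\Sigma_2$ is controlled.

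You observe that~\eqref{T1cond} is exactly the statement that $\{\gamma_I|I|\}$ is a Carleson sequence for the measure $\mu=w^{-1}\,dx$ with constant $q^2$, and then invoke the weighted dyadic Carleson embedding theorem as a black box to obtain $\Sigma_2\le 8q^2\,\La\vf^2 w^{-1}\Ra_J$. The paper instead unrolls that very embedding theorem in situ: it introduces the Bellman function $\cB(F,f,A,v)=F-\frac{f^2}{v+q^{-2}A}$, couples it with the running Carleson-sum variable $A_I=\frac{1}{|I|}\sum_{\ell\in\cD(I)}\gamma_\ell|\ell|$, and runs the telescoping super-martingale argument directly to reach the same bound~\eqref{qembSq}. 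Since $F-\frac{f^2}{v+cA}$ is precisely (a variant of) the standard Bellman function used to \emph{prove} the weighted Carleson embedding theorem, the two arguments are essentially the same proof at different levels of encapsulation. Your route is shorter and more transparent if one is willing to cite Carleson embedding; the paper's route stays within the Bellman framework of the surrounding sections and is self-contained.
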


\begin{proof}
Let us consider the following aggregate, which is quite akin to the function from \eqref{cBQ} ( constant $a$ is again a small positive constant):
\begin{equation}
\label{cBQA}
\cB(F, f, A, v) := F- \frac{f^2}{ v+ aq^{-2} A}\,,
\end{equation}
Given $I\in \cD$, we put
$$
A_I := \frac1{|I|} \sum_{\ell\in \cD(I)} \gamma_\ell |\ell|\,.
$$
As before we set
$Y_I:= ( F_I, f_I, A_I, v_I)$, where is defined above, and 
$$
F_I = \La \vf w^{-1}\Ra_I,\, f_I =\La \vf w^{-1}\Ra_I,  v_I=\La w^{-1}\Ra_I\,.
$$
We also denote $y_I := ( F_I, f_I, \frac12( A_{I_+}+A_{I_-}), v_I)$.
We can estimate
\begin{align*}
 &|I|\cB(Y_I) - |I_+|\cB(Y_{I_+}) - |I_+|\cB(Y_{I_-}))
 \\
&  = |I| \left((\cB(Y_I) - \cB(y_I )) + (\cB(y_I) -  \frac12\cB(Y_{I_+}) - \frac12\cB(Y_{I_-}))\right)
\\
 & \ge  \frac{|I|}{4} \frac{f_I^2}{v_I^2} \big(A_I - \frac12( A_{I_+}+A_{I_-})\big) \frac{a}{q^2}=  \frac{a}{4q^2} \frac{f_I^2}{v_I^2} \gamma_I |I|
 \\
 &= \frac{a}{4q^2}\frac{ \La \vf w^{-1}\Ra_I^2}{\La w^{-1}\Ra_I^2} (\La w^{-1}\Ra_{I_+} - (\La w^{-1}\Ra_{I_-})^2 \La w\Ra_I \,|I|
\end{align*}
Summing up over all $I\in \cD(J)$ and using the telescopic nature of terms, we get
\begin{equation}
\label{qembSq}
\frac1{|J|}\sum_{I\in D(J)}\left(\La \vf w^{-1}\Ra_{I, w^{-1}}\right)^2 \gamma_I |I| \le A q^2\La \vf^2 w^{-1}\Ra_J\,,
\end{equation}
which is like \eqref{embSq}, but with $q$ replacing $Q$. In the inequality above we used the concavity of function $\cB(F, f, A, v) = F- \frac{f^2}{ v+ a\,q^{-2} A}$, which, in particular, gives us that 
$$
 \cB(y_I) -  \frac12\cB(Y_{I_+}) - \frac12\cB(Y_{I_-}) \ge 0\,,
 $$
 and we use the estimate from below for $\frac{\pd}{\pd A}\cB(F, f, A, v)$:
 $$
 \min_{\frac12( A_{I_+}+A_{I_-}) \le A\le A_I}\frac{\pd}{\pd A}\cB(F, f, A, v) \ge \frac{f^2}{(v +a\,q^{-2} A_I)^2} \ge  \frac{f^2}{4v^2}\,.
 $$
 The last inequality is clear if we use \eqref{T1cond}.

Now combining estimate  \eqref{Sigma1} of the previous section and \eqref{qembSq} we get the claim of Theorem \ref{qQ}.

\end{proof}

%

\section{Weak weighted estimates for the square function}
\label{w-square}

Our measure space  will be $(X, \mfA, dx)$, where $\sigma$-algebra $\mfA$ is generated by a standard dyadic filtration $\cD=\cup_k \cD_k$ on $\bR$.
We considered the weak weighted estimate for the martingale transform. In \cite{NRVV} the end-point exponent was $p=1$, and critical weights belong to $A_1$ class. 

Now we are  going to consider weighted weak estimates for the dyadic  square function.  The end-point exponent is now $p=2$, and critical weights belong to $A_2$ class. See the estimates for subcritical and supercritical exponents in \cite{LaSc} and \cite{DSaLaRey}.

The weak estimate of the square function in $L^2(w dx)$, $w\in A_1$ is well known, see, e. g. \cite{Wilson}. The fact that the end-point exponent is now at $p=2$ is explainable by the bi-linear nature of square function transform.

Recall that the symbol $\ch(J)$ denotes the dyadic children of $J$. Recall that the martingale difference operator $\Delta_J$ was defined as follows:
$$
\Delta_Jf := \sum _{I\in ch(J)} \1_I ( \La f\Ra_I -\La f\Ra_J).
$$

For our case of dyadic lattice on the line we have  that $|\Delta_Jf|$ is constant on $J$, and
$$
\Delta_Jf =\frac{1}{2}[(\La f \Ra_{J_+}-\La f \Ra_{J_-})\1_{J_+}  + (\La f \Ra_{J_-}-\La f \Ra_{J_+})\1_{J_-}]\,.
$$

The square function  operator is 
$$
(Sf)^2 (x)= \sum_{J\in \cD} |\Delta_J f|^2\1_J(x)\,.
$$

In this Section we work only with the dyadic $A_2$ classes of weights, but we skip the word dyadic, because we consider here only dyadic operators.
We consider a positive  function $w(x)$, and as before we call it  $A_2$ weight if

\begin{equation}
\label{a2}
Q:=\QQ:=\sup_{J\in \cD}\La w\Ra_{J} \La w^{-1}\Ra_J<\infty\,.
\end{equation}

 We are  going to consider restricted weak estimate, when the operator is applied to $f= \1_E$, but only for set $E$ being itself  a dyadic interval.  
 \begin{equation}
\label{weakTSqF}
\frac{1}{|I|} w\left\{x\in I: \sum_{J\in D(I)} |\Delta_J w^{-1}|^2 \1_J(x) >\lambda\right\} \le C_{T}(\QQ) \frac{\La w^{-1}\Ra_I}{\la}\,.
\end{equation}

We give some information on  full weak estimate for the square function operator:

\begin{equation}
\label{weakSqF}
\frac{1}{|I|} w\left\{x\in I: \sum_{J\in D(I)} |\Delta_J( \vf w^{-1})|^2 \1_J(x) >\lambda\right\} \le C(\QQ) \frac{\La \vf^2w^{-1}\Ra_I}{\la}\,.
\end{equation}

Here $\vf$ runs over all functions  such that $\supp \vf\subset I$ and $\vf\in L^2(I, w\, dx), w\in A_2$.

We wish to understand the sharp order of magnitude of the weak estimates constants constants $C_{T}(\QQ), C(\QQ)$  from \eqref{weakTSqF}, \eqref{weakTSqF} correspondingly ($\QQ$ is supposed to be large). We wish to compare them to the similar estimates of strong type: \eqref{strongT}, \eqref{strong} from Section \ref{strongS}.

Constant $C(\QQ)$ gives the dependence on $\QQ$ of the norm of the  sub linear operator $S\vf:= \left(\sum_{J\in \cD} |\Delta_J \vf|^2\1_J(x)\right)^{1/2}$ from $L^2(w)$ to $L^{2,\infty}(w)$. Constant $C_{T}(\QQ)$ gives the dependence on $\QQ$ of the same weak norm but measured only on test functions $\vf$ that are just characteristic functions of dyadic intervals: $\vf= \1_I, I\in \cD$.

\bigskip

Let $M_a$ be an operator of multiplication by function $a$. Notice a convenient change of variable $\vf\to \vf W^{-1}$, which reduces the problem of estimating the square function operator $S: L^2(w)\to L^{2, \infty}(w)$ to the estimate of $S_{w^{-1}}:= S M_{w^{-1}}$ from $L^2(w^{-1})$ to $L^{2,\infty}(w)$.

\bigskip

Of course, there are several obvious estimates: 1) weak constants are smaller than strong constants:
$$
C_{T}(\QQ) \le C_{s, T}(\QQ),\, C(\QQ) \le C_s(\QQ)
$$
just by Chebyshev inequality; also 2) test function estimates are  trivially at least as good as the full estimates:
$$
C_{ T}(\QQ) \le C(\QQ),\, C_{s, T}(\QQ) \le C_s(\QQ)\,.
$$
It is quite well known (although far from being trivial) that there is at least one converse inequality:
\begin{equation}
\label{T1}
C_s(\QQ)\le A(d) C_{s, T}(\QQ)\,.
\end{equation}
This is an instance of the celebrated $T1$ theorem, on this occasion applied to square function transform in the weighted situation. We have proved this statement in Theorem \ref{t-strong} above, see Remark \ref{T1principle}.

\bigskip

{\bf  Remark.}
\label{smaller}
Weak type estimates with sharp dependence on $\QQ$ are rather difficult, in part because their sharp constants are much smaller than the sharp constants in the corresponding strong type estimates.

{\bf  Remark.}
This is in a big contrast with the usual singular integrals $T$ of \cz \, type. We know that for $w\in A_2$ the strong norm $\|T: L^2(w)\to L^2(w)\|$ is exactly equivalent to
$\|T: L^2(w)\to L^{2,\infty}(w)\|+\|T^*: L^2(w^{-1})\to L^{2,\infty}(w^{-1})\|$, see \cite{PTV}, \cite{HPTV} for example.

In view of Remark \ref{smaller} it was  important to review the strong sharp weighted estimate $S: L^2(w)\to L^2(w)$ in Section \ref{strongS}.

{\bf  Remark.}
 The reader can see that \eqref{weakTSqF} estimate is a particular case of \eqref{weakSqF} estimate for a special choice of test function $f=\1_I$. The same remark holds for \eqref{strongT} and \eqref{strong}. We already noted above that test function estimate \eqref{strongT} (plus its symmetric counterpart for $w$ exchanged by $w^{-1}$) imply the strong estimate \eqref{strong}. This, as we already mentioned, is the essence of  weighted $T1$ theorem (testing condition theorem in the terminology of E. Sawyer).

{\bf  Remark.}
Unfortunately there is no $T1$ principle in general for weak type estimates. See the papers \cite{Ca}, \cite{ABMau},  \cite{CaGr}.

\bigskip

\subsection{Sharp constant in weak testing estimate. An ``isoperimetric" problem.}
\label{s-weakT}

\begin{theorem}
\label{t-weakT}
$C_{w,T} \le A Q$ and this estimate is sharp.
\end{theorem}

We will prove the estimate, the sharpness is well known just for one-point-singularity weights.

As always in this Section $A_2$ means  dyadic $A_2$. We introduce the following function of $3$ real variables
\begin{equation}
\label{supTest}
B_Q(u, v, \la):= \sup \frac{1}{|J|} w\left\{x\in J: \sum_{I\in D(J)} |\Delta_I w^{-1}|^2 \chi_I(x) >\lambda\right\} \,,
\end{equation}
where the supremum is taken over all $w\in A_2, [w]_{A_2}\le Q$, such that 
$$
\La w\Ra_J =u, \La w^{-1}\Ra_J = v\,.
$$
Notice that by scaling argument our function does not depend on $J$ but depends on $Q=[w]_{A_2}$. For brevity we can skip $Q$: $B:= B_Q$.

{\bf  Remark.}
Ideally we want to find the formula for this function. Notice that this is similar to solving a problem 
of ``isoperimetric" type, where the solution of certain non-linear PDE is a common tool, see e. g. \cite{BarthH}. 

\bigskip

\subsubsection{Properties of $B$ and the main inequality}
\label{prop-weakT}
 Notice several properties of $B$:
\begin{itemize}
\item
$B$ is defined in  $\Omega:=\{(u,v, \la): 1\le uv \le Q, u>0, v>0, 0\le \la<\infty\}$.
\item If $P=(u,v, \la), P_+=(u_+,v_+, \la_+), P_-=(u_-,v_-,\la_-)$ belong to $\Omega$, and
$u=\frac12(u_+ +u_-)$, $v=\frac12(v_+ +v_-)$, $\la= \min(\la_+, \la_-)$,, then {\bf the main inequality} holds with constant $c=1$:
$$
B\left(u, v, \la+c (v_+ - v_-)^2\right)-\frac{B(P_+)+ B(P_-)}2 \ge 0.
$$
\item $B$ is decreasing in $\la$.
\item Homogeneity: $B(ut, v/t, \la/ t^2) = tB(u,v, \la), t>0$.
\item Obstacle condition: for all points $(u, v, \lambda)$ such that $10\le uv \le Q,\, \la\ge 0$, if $\la \le \delta \,v^2$ for a positive absolute constant $\delta$, one has $B(u, v, \la)= u$.
\item The boundary condition $B(u, v, \la) = 0$ if $uv=1$.
\end{itemize}

All these properties are very simple consequences of the definition of $B$. However, 
let us explain a bit the second and the fifth bullet. 
The second bullet is the consequence of the scale invariance of $B$. 
We consider data $P_+$ and find weight $w_+$ that almost supremizes $B(P_+)$.
By definition of $B$, we  have it on $J$. But by scale invariance we can think that $w_+$ lives on $J_+$. 
Then we consider data $P_-$ and find weight $w_-$ that almost supremizes $B(P_-)$.
Again we are supposed to have it on $J$. But by scale invariance we can think that $w_-$ lives on $J_-$. The next step is to consider the contatination of $w_+$ and $w_-$:
$$
w_c:=\begin{cases} w_+,\,\, \text{on} \,\, J_+\\
w_-,\,\, \text{on} \,\, J_-\,.\end{cases}
$$
Clearly this new weight is a competitor for giving the supremum for date $P$ on $J$. But it is only a competitor, the real supremum in \eqref{supTest} is bigger. This implies the second bullet above ({\it the main inequality}).

\bigskip

Now let us explain the fifth bullet above, we call it {\it the obstacle condition}. 
Let us consider a special weight $w_s$ in $J$: it is one constant on $J_-$ and just another constant on $J_+$. Moreover, we wish to have
$\La w_s^{-1}\Ra_{J_+} = 4 \La w_s^{-1}\Ra_{J_-}$. Notice that then $b\La w_s^{-1}\Ra_J \le |\Delta_J w_s^{-1}|$ 
with some positive absolute constant $b$.  

Now it is obvious that if $\lambda\le \delta^2\La w_s^{-1}\Ra_J^2$ 
then $\{x\in J: S^2_{w_s^{-1}} (\1_J) \ge\lambda\} =J$ and so 
$\frac1{|J|} w_s\{x\in J: S^2_{w_s^{-1}} (\1_J) \ge\lambda\} =\La w_s\Ra_J$.  
Notice now that $w_s$ is just one admissible weight, and that we have to 
take supremum over all such admissible weights. 

We get the fifth bullet above (=the obstacle condition):
$B(u, v, \la)=u$ for those points $(u, v, \la)$ in the domain of definition of $B$, 
where the corresponding $w_s$ with $\La w_s\Ra =u, \La w_s^{-1}\Ra_J= v$ exists. 
It is obvious that for all sufficiently large $Q$ and for any pair $(u,v)$ such that $10\le uv\le Q$ one can construct a just ``two-valued" $w_s$ as above with
$[w_s]_{A_2} \le Q$ (we recall that we deal only with dyadic $A_2$ weights).

\bigskip

Notice that the main inequality above transforms into a {\bf partial differential inequality} if considered infinitesimally (and if we tacitly assume that $B$ is smooth):
\begin{equation}
\label{infMI}
-\frac12 d^2_{u,v}B +c\frac{\partial B}{\partial \la} (dv)^2 \ge 0\,.
\end{equation}
We get it with $c=1$ for the function $B$ defined above  (if $B$ happens to be smooth).

We are not going to find $B$ defined in \eqref{supTest}, but instead we will construct smooth
$\cB$ that satisfies all the properties above (and of course \eqref{infMI}) except for the boundary condition 
(the last bullet above). It will satisfy even a slightly stronger properties, for example, the obstacle condition (the fifth bullet) will be satisfied with $1$ instead of $10$:
\begin{equation}
\label{obstTest}
\begin{aligned}
& \forall (u, v, \lambda)\,\text{ such that}\, 1\le uv \le Q,\, \la\ge 0, 
\\
& \text{if}\,\, \la \le \delta \,v^2\,\,\text{ for some}\, \delta>0, \,\text{then}\,\, B(u, v, \la)= u\,.
\end{aligned}
\end{equation}
Here $a$ will be some positive absolute constant (it will not depend on $Q$).

\bigskip

Using our usual telescopic sums consideration it will be very easy to prove the following

\begin{theorem}
\label{t-weakT}
Suppose we have a smooth function $\cB$ satisfying all the conditions above except the boundary condition, 
but satisfying the obstacle condition in the form \eqref{obstTest}. We also allow $c$ to be a small positive constant (say, $c=\frac18$). And suppose it also satisfies
\begin{equation}
\label{gr-weakT}
\cB(u, v, \la) \le A\,Q\frac{v}{\la}\,.
\end{equation}
Then the constant $C_{w, T}$ in \eqref{weakTSqF} is at most $A\,Q$.
\end{theorem}

\begin{proof}
It is a stopping time reasoning. It is enough to think that $w$ is constant on some very small dyadic intervals and to prove the estimate on $C_{w,T}$ uniformly. Then we start with any such $w, [w]_{A_2}\le Q$, and we use the main inequality with $u=\La w\Ra_J, v=\La w^{-1}\Ra_J$, $u_\pm=\La w\Ra_{J_\pm}, v_\pm=\La w^{-1}\Ra_{J_\pm}$, 
$$ 
\la- c (\La w^{-1}\Ra_{J_+}-\La w^{-1}\Ra_{J_-})^2=: \la_{J_\pm}
$$
to obtain
\begin{equation}
\label{main-wT}
\begin{aligned}
 |J_+| \cB(\La w\Ra_{J_+}, \La w^{-1}\Ra_{J_+}, \la_{J_+})+ |J_-| \cB(\La w\Ra_{J_-}, \La w^{-1}\Ra_{J_-}, \la_{J_-}) \le  
 \\
 \cB(\La w\Ra_J, \La w^{-1}\Ra_J, \la)|J| \le \frac{A\,Q\La w^{-1}\Ra_J}{\la}|J|
\end{aligned}
\end{equation}
We continue to use the main inequality (because $J_\pm$ are not at all different from $J$) 
and finally after large but finite number of steps, on certain  collection $\cI$ of small intervals $I=J_{\pm\pm\cdots\pm}$ we come  to the situation that
\begin{equation}
\label{la-final}
\la_{J_{\pm\pm\cdots\pm}} <c (\La w^{-1}\Ra_{J_{\pm\pm\cdots+}}-\La w^{-1}\Ra_{J_{\pm\pm\cdots {-}}})^2\,.
\end{equation}
Collection $\cI$ may be empty of course, but we know that $I\in \cI$ if 
on $I$   the following holds for $x\in I$: 
$$
c\sum_{L\in D(J), I\subset L} |\Delta_I w^{-1}|^2 \1_L(x)>\lambda\,.
$$
Let us combine \eqref{la-final} with an obvious inequality 
$$
c (\La w^{-1}\Ra_{J_{\pm\pm\cdots+}}-\La w^{-1}\Ra_{J_{\pm\pm\cdots {-}}})^2 \le \delta\La w^{-1}\Ra^2_{J_{\pm\pm\cdots}}\,.
$$
 At this moment we use the property 5 of $B$ called obstacle condition. 
 On intervals $I\in \cI$
the obstacle condition will provide us with $\cB(\La w\Ra_{I}, \La w^{-1}\Ra_{I}, \la_{I}) =\La w\Ra_I$.
So on a certain large  finite step $N$  we get from the iteration of \eqref{main-wT} $N$ times the following estimate
$$
\sum_{I\in D_N(J): I\in \cI} |I| \La w\Ra_I \le \frac{A\,Q\La w^{-1}\Ra_J}{\la}|J|\,.
$$
Therefore, we proved
$$
\frac1{|J|} w\{x\in J: \sum_{I\in D(J)} |\Delta_I w^{-1}|^2 \chi_I(x) >\lambda\} \le A\, Q \frac{\La w^{-1}\Ra_J}{\la}\,,
$$
which is \eqref{weakTSqF}.

\end{proof}

\bigskip

\subsubsection{Formula for the function $\cB$.\,  Monge--Amp\`ere equation with a drift.}
\label{formula-weakT}

Here is the formula for $\cB$ that satisfies all the properties in \ref{prop-weakT} (except for the last one, the boundary condition):
\begin{equation}
\label{form-weakT}
\begin{aligned}
& \cB(u, v,\la) =  \frac1{\sqrt\la} \Theta(u\sqrt\la, \frac{v}{\sqrt\la})\,,
\\
&\text{where}\,\,\, \Theta(\gamma, \tau):= \min \left(\gamma, Q e^{-\tau^2/2}\int_0^\tau e^{s^2/2} ds\right)\,.
\end{aligned}
\end{equation}

Notice that the fact that $\cB$ has the form $\cB(u, v, \la) = \frac1{\sqrt\la} \Theta(u\sqrt\la, \frac{v}{\sqrt\la})$ is trivial, this follows from property 4 called homogeneity.

Notice also that function $\Theta$ is  given in the domain enclosed by two hyperbolas
$$
H:=\{(\gamma, \tau)>0: 1\le \gamma \tau \le Q\}\,.
$$

All properties  listed at the beginning of Section \ref{prop-weakT} (except for the sixth bullet, which is boundary condition, but we do not use it anywhere) follow by direct computation.
In the next section we explain how to get this formula.

\subsubsection{Explanation of how to find such a function $\Theta$.}
\label{expl-weakT}

\bigskip

The main inequality (with $c=\frac18$) in terms of $\Theta$ becomes a ``drift concavity condition":
\begin{equation}
\label{Theta-MI}
\begin{aligned}
& \frac1{\sqrt{1+\frac{(\Delta\tau)^2}{8}}}\Theta \bigg( \sqrt{1+\frac{(\Delta\tau)^2}{8}} \frac{\gamma_-+\gamma_+}{2}, 
\frac1{\sqrt{1+\frac{(\Delta\tau)^2}{8}}} \frac{\tau_-+\tau_+}{2}\bigg)\ge 
\\
& \frac{\Theta(\gamma_-, \tau_-) + \Theta(\gamma_+, \tau_+)}{2}\,,
\end{aligned}
\end{equation}
where $(\gamma_-, \tau_-), (\gamma_+, \tau_+) \in H$, $0<\tau_-<\tau_+, \Delta \tau:=\tau_+-\tau_-$.

Assuming that $\Theta$ is smooth (we will find a smooth function), the infinitesemal version appears,  it is a sort of Monge--Amp\`ere relationship with a drift. Namely, the following matrix relationship must hold
\begin{equation}
\label{negative}
\begin{bmatrix}
\Theta_{\gamma\gamma}, & \Theta_{\gamma\tau}\\
\Theta_{\gamma\tau}, & \Theta_{\tau\tau} +\Theta + \tau\Theta_\tau -\gamma\Theta_\gamma
\end{bmatrix} \le 0\,.
\end{equation}
A direct calculation shows that this property is equivalent to the following one.
On any  curve  $\gamma =\phi(\tau)$ lying in the domain $H$ 
\begin{equation}
\label{change-var}
\begin{aligned}
&\text{and such that }\,\, \phi'' + \tau\phi' +\phi=0 
\\
& \text{we have}\, \, (\Theta(\phi(\tau), \tau))'' + \tau  (\Theta(\phi(\tau), \tau))' + \Theta(\phi(\tau), \tau)\le 0\,.
\end{aligned}
\end{equation}

This hints at a possibility to have  a change of variables $(\gamma, \tau)\to (\Gamma, T)$ such that
condition \eqref{negative} transforms to a simple concavity. To some extent this is what happens. Namely, notice the following simple
\begin{lemma}
\label{l-change-var}
Consider the following change of variable: $T=\int_0^\tau e^{s^2/2} ds$.  Then $\phi''(\tau)+\tau \phi'(\tau) +\phi(\tau)\le 0$ if and only if  $(e^{\tau^2/2} \phi(\tau))_{TT} \le 0$ and  $\phi''(\tau)+\tau \phi'(\tau) +\phi(\tau)= 0$ if and only if  $(e^{\tau^2/2} \phi(\tau))_{TT} = 0$.
\end{lemma}

\begin{proof}
The proof is a direct differentiation.
\end{proof}

This Lemma hints that the right change of variable should look like

\begin{equation}
\label{GaT}
\begin{cases} \Gamma:= \gamma e^{\tau^2/2},\\ T= \int_0^\tau e^{s^2/2} ds\,,\, (\gamma, \tau)\in \bR_+^1\times \bR_+^1\end{cases}
\end{equation}
then in the new coordinates the family of  curves $\gamma =\phi(\tau)$ such that $\phi'' + \tau\phi' +\phi=0 $ becomes a family of all  straight lines $\Gamma = CT +D$. (Notice that both families depend on two arbitrary constants.)

Denote
$$
O:= \{(\Gamma, T): (\gamma, \tau)\in G\}.
$$

The condition $ (\Theta(\phi(\tau), \tau))'' +\tau  (\Theta(\phi(\tau), \tau))' + \Theta(\phi(\tau), \tau)\le 0$ on any of these curves  becomes
\begin{equation}
\label{TT}
\left(e^{\tau^2/2}\Theta(\phi(\tau), \tau)\right)_{TT} \le 0\,, \,\, \Gamma=CT+D\,,\,\, (\gamma, \tau)\in G\,.
\end{equation}
which is the concavity of $e^{\tau^2/2}\Theta (\gamma, \tau)$  in a new coordinate $T$ along the line $\Gamma = CT +D$.
Let us rewrite two functions in the new coordinates:
$$
\Phi(\Gamma, T):= \Theta(\gamma, \tau)\,,\,\, U(T) := e^{\tau^2/2}\,.
$$
Then \eqref{TT} transforms into
\begin{equation}
\label{concUPhi}
\forall C, D \in \bR\,,\,\,\left(U(T)\Phi(CT+D, T))\right)_{TT} \le 0\,, \,\, (\Gamma, T)\in O\,.
\end{equation}
This is just a concavity of $U(T) \Phi(\Gamma, T)$ on $O$ of course. Notice that neither $H$ nor $O$ are convex, so we  should understand \eqref{concUPhi} as a local concavity in $O$: just the negativity of its second differential form 
$$
d^2_{\Gamma, T} (U(T)\Phi(\Gamma, T)\le 0\,,\,\, (\Gamma, T)\in O\,.
$$

\medskip

So we reduce the question to finding a concave function in new coordinates. Now we choose a simplest possible concave function:
$$
U(T) \Phi(\Gamma, T) := \min (\Gamma, KT)\,,
$$
where the constant $K=K(Q)$ will be chosen momentarily.

\medskip

If we write down now $\Theta(\gamma, \tau)= \Phi(\Gamma, T)$ in the old coordinates, we get exactly function $\Theta$ from \eqref{form-weakT} (we need to define constant $K$ yet), namely,
\begin{equation}
\label{Theta5}
\Theta(\gamma, \tau):=\min \left(\gamma, K e^{-\tau^2/2}\int_0^\tau e^{s^2/2} ds\right)\,.
\end{equation}

Recall that now we can consider
\begin{equation}
\label{Btest}
B(u, v, \la)=\frac1{\sqrt{\la}} \Theta(u\sqrt{\la}, \frac{v}{\sqrt{\la}})
\end{equation}
and we are going to apply Theorem \ref{t-weakT} to it. But we need to choose $K$ to satisfy all the conditions (except the last one) at the beginning of Section \ref{prop-weakT}.

First of all it is now very easy to understand why the form of the domain $H=\{1\le \gamma \tau\le Q\}$ plays the role. In fact, by choosing
$$
K=AQ
$$
with some absolute constant $A$, we guarantee that in this domai n our function $\Theta$ satisfies the obstacle condition
\begin{equation}
\label{obstacle-weakT}
\Theta(\gamma, \tau)  = \gamma\,\, \text{as soon as}\,\, \tau\ge a_0>0\,,
\end{equation}
where $a_0$ is an absolute positive constant.
In fact, for all sufficiently small $\tau$, $e^{-\tau^2/2}\int_0^\tau e^{s^2/2} ds \asymp \tau$, and therefore, 
for all sufficiently small $\tau$ (smaller than a certainn absolute constant)
$$
\Theta(\gamma, \tau):=\min \left(\gamma, K \tau\right)\,.
$$
The fifth condition at the beginning of Section \ref{prop-weakT} (the obstacle condition) requires then that $\min \left(\gamma, K \tau\right)=\gamma$ if $\tau\ge a_0>0$. But on the upper hyperbola then $\gamma=Q/a_0$ for $\tau=a_0$. We  see that the smallest possible $K$ we can choose to satisfy the obstacle condition is $K\asymp Q$.

\bigskip

Secondly, function $\Theta$ satisfies the infinitesimal condition \eqref{negative} by construction. But we need to check that the main inequality \eqref{Theta-MI} is satisfied as well.

This can be done by the following lemma.

\begin{lemma}
\label{redu-weakT}
Inequality \eqref{Theta-MI} for function $\Theta$ built above holds if and only if the following inequality is satisfied for $\phi(\tau):=  e^{-\tau^2/2} \int_0^\tau e^{s^2/2} ds$:
\begin{equation}
\label{phi}
\begin{aligned}
& \frac1{\sqrt{1+\frac{(\Delta\tau)^2}{100}}}\phi \bigg(\frac{\tau_1+\tau_2}{2\sqrt{1+\frac{(\Delta\tau)^2}{100}}}\bigg) \ge 
\\
& \frac{\phi(\tau_1) +\phi(\tau_2)}{2}\,, \, \forall \,0<\tau_1\le\tau_2 \le\tau_0\,,
\end{aligned}
\end{equation} 
with some absolute positive small constant  $\tau_0$.
\end{lemma}

\begin{proof}
Lemma is easy, because we can immediately see that the main inequality \eqref{Theta-MI} commutes with the operation of minimum. 
\end{proof}

We are left to prove  \eqref{phi}. We leave this as exercise to the reader. It is a direct if rather involved calculation.

\bigskip

\subsection{A Partial Differential Inequality for the sharp weak  estimate of  weighted square function}
\label{Full}

In this Section we are going to reduce the harmonic analysis problem of sharp estimate of weak norm of weighted square function to a certain Partial Differential Inequality (PDI) with an obstacle condition in a rather simple subdomain of $\bR^3$.

We introduce the following function of $5$ real variables (compare with \eqref{supTest}):

\begin{equation}
\label{supFull}
\begin{aligned}
& \Bel_Q(F, f, u, v, \la):=
\\
& \sup \frac{1}{|J|} w\left\{x\in J: \sum_{I\in D(J)} |\Delta_I (\phi w^{-1})|^2 \chi_I(x) >\lambda\right\} \,,
\end{aligned}
\end{equation}
where supremum is taken over all $w\in A_2, [w]_{A_2}\le Q$, such that 
$$
\La w\Ra_J =u, \La w^{-1}\Ra_J = v\,,
$$
and over all functions $\phi$ such that $\La \phi w^{-1}\Ra_J = f, \La \phi^2 w^{-1}\Ra_J = F$.

{\bf  Remark.}
Our usual disclaimer: the reader should not be confused by notations, but should be warned: of course $F, f, u, v$ stand for numbers, not functions.

\bigskip

Notice that by scaling argument  function $\Bel_Q$ does not depend on $J$ but depends on $Q=[w]_{A_2}$. Notice also that numbers $F, f$ and $u, v$ are not independent, in particular,
\begin{equation}
\label{domainFull}
f^2 \le F\cdot v,\,\ 1\le u\cdot v\le Q,\, \la\ge 0\,.
\end{equation}
This describes the domain $\Omega=\Omega_Q$ of definition of function $B$. 
Let
\begin{equation}
\label{hat}
\sup_{\Omega_Q}\frac{\la B(F, f, u, v, \la)}{F} =: R\,.
\end{equation}
By simple one-point-singularity weight we know that for an absolute positive const ant $\kappa$ the following holds
$$
R\ge \kappa\,Q\,.
$$

The question is of course whether one of the following is true
\begin{equation}
\label{whichone}
\begin{cases}
\exists A<\infty \, \,\,R\le A Q\\
\limsup_{Q\to \infty} R/ Q =\infty\\
\exists \eps>0, a_\eps>0: \, R \ge a_\eps Q (\log Q)^\eps\,?\end{cases}
\end{equation}

From \cite{LaSc}, \cite{DSaLaRey} we know (recall that $\hat Q$ is not the weak norm but is the square of the weak  norm $\|S:L^2(w)\to L^{2, \infty} (w)\|)$:
\begin{equation}
\label{aboveFull}
R \le A Q \log Q\,.
\end{equation}

The goal of this section is to reduce the estimate of $R$ to a solution of a PDI in a certain simple  domain in $\bR^3$.

\begin{lemma}
\label{even}
Function $\Bel_Q$ introduced above is 1) even in variable $f$, 2) decreases in variable $\la$, 3) it is not greater than $u$.
\end{lemma}

\begin{proof}
All claims of this lemma follow immediately from the definition of $\Bel_Q$.
\end{proof}

\subsection{The main inequality and the  Partial Differential version of it.}
\label{mainSqfull}
Exactly like in Section \ref{prop-weakT} we can write
the following {\it main inequality}. If $P=(F, f, u,v, \la), P_+=(F_+, f_+, u_+,v_+, \la_+), P_-=(F_-, f_-, u_-,v_-,\la_-)$ belong to $\Omega_Q$, and
$F=\frac12(F_+ +F_-)$, $f=\frac12(f_+ +f_-)$, $u=\frac12(u_+ +u_-), v= \frac12(v_+ + v_-), \la=\min(\la_+, \la_-)$, then {\it the main inequality} holds with constant $\kappa=1$:
\begin{equation}
\label{MIfull}
\Bel\left(F, f, u, v, \la+\kappa (f_+-f_-)^2\right)-\frac{\Bel(P_+)+ \Bel(P_-)}{2} \ge 0.
\end{equation}

In particular, function $\Bel$ is convex in $(F, f, u, v)$ variables and monotone decreasing (and left continuous) in $\la$.
In infinitesimal form this inequality becomes
\begin{equation}
\label{MIfull_inf}
-\frac12 d^2_{F, f, u, v}\Bel  + \kappa \frac{\partial \Bel}{\partial{\la}} \cdot (df)^2\ge 0\,.
\end{equation}
By Aleksandrov's theorem \cite{EG} we have this inequality pointwise a. e. in $\Om$, if by $ \frac{\partial \Bel}{\partial{\la}}$ we denote upper derivative in $\la$.

\bigskip

\subsection{Reduction of the number of variables. Properties of $\Bel=\Bel_Q$ and $\Psi=\Psi_Q$.}
\label{neity}
By the construction of $\Bel=\Bel_Q$ we can easily see two homogeneity properties:
\begin{align*}
& \Bel(\frac{F}{t}, \frac{f}{t}, tu, \frac{v}{t}, \frac{\la}{t^2}) = t \Bel(F, f, u, v, \la)\,,
\\
& \Bel(\frac{F}{s^2}, \frac{f}{s}, u, v, \frac{\la}{s^2}) = \Bel(F, f, u, v, \la)\,.
\end{align*}
Denote $\Phi:=\Phi_Q(\cdot, \cdot, \cdot, \cdot):= \Bel(\cdot, \cdot, \cdot, \cdot), 1)$, then from these relationships we obtain
\begin{align*}
& \Phi(\frac{F}{\sqrt{\la}}, \frac{f}{\sqrt{\la}}, \sqrt{\la} u, \frac{v}{\sqrt{\la}}) = \sqrt{\la}  \Bel(F, f, u, v, \la),,
\\
& \Phi(\frac{F}{\la}, \frac{f}{\sqrt{\la}}, u, v) = \Bel(F, f, u, v, \la)\,.
\end{align*}
Hence, with obvious notations for $A, c$, $\Phi(\sqrt{\la}A, c, \sqrt{\la}u, \frac{v}{\sqrt{\la}}) = \sqrt{\la} \Phi (A, c, u, v)$. Denote
by $\Psi:=\Psi_Q(\cdot, \cdot, *) = \Phi(\cdot, *, \cdot, 1)$. Then denoting by 
\begin{equation}
\label{changevar}
a= \sqrt{\la} A=\frac{Fv}{\la}, \,\,b=uv, \,\,c=\frac{f}{\sqrt{\la}}
\end{equation}
 we can write
\begin{equation}
\label{toPsi}
\Psi(a, b, c) =v \Phi(\frac{a}{v}, c, u, v)=v \Bel(F, f, u, v, \la)\,.
\end{equation}
We can rewrite all information on $\Bel$ in terms of $\Psi$.
For example, the domain $\Om_Q=\{(F, f, u, v, \la): f^2 \le Fv, 1< uv<Q, \la>0\}$  of definition of $\Bel$ is mapped to
the domain of definition of $\Psi$:
$$
G:=G_Q:=\{(a, b, c): c^2 \le a, 1< b< Q\}\,.
$$
\eqref{hat} becomes
\begin{equation}
\label{hatPsi}
\sup_{(a, b, c)\in G_Q}\frac{\Psi(a, b, c)}{a} =R\,.
\end{equation}

By the formula
\begin{equation}
\label{PsiBel}
\Psi(\frac{vF}{\la}, uv, \frac{f}{\sqrt{\la}})= v \Bel (F, f, u, v, \la)
\end{equation}
we see that $\Psi$ is concave in all its three variables (just vary $(F, f, u)$ to see this).

The main property \ref{MIfull_inf} becomes the following a. e. pointwise in $G$ property (it is a direct calculation with the change of variables \eqref{changevar}):

\begin{equation}
\label{Th_form}
\begin{bmatrix} &\Psi_{aa},\,\,\,\,\, &\Psi_{ab}\,\,\,&\Psi_{ac}\,\, &0\\
&\Psi_{ab}\,\,&\Psi_{bb}\,\,&\Psi_{bc}\,\, &0\\
&\Psi_{ac}\,\,\,\,\,&\Psi_{bc}\,\,\, &\Psi_{cc} +\kappa (c\Psi_c+ 2a\Psi_a)\,\,&-\Psi_c\\
&0\,\,\,\,\,\,&0\,\,\, &-\Psi_c\,\, & 2\Psi-2a\Psi_a-2b\Psi_b
\end{bmatrix}\le 0
\end{equation}
in $G=G_Q$ and with $\kappa=1$.

We also have obstacle condition exactly as in Section \ref{prop-weakT}. By the same example of the same weight $w_s$ as in Section \ref{prop-weakT} we can see
that
$$
\Bel(t^2v, tv, u, v, \la) = u\,,\,\,\text{as soon as}\,\, \la \le \delta v^2
$$
with a certain small absolute positive $\delta$ and any (positive or negative)  $t$ as soon as $uv\ge 10$. In therms of $\Psi$ this reads as follows:
$$
\Psi(c^2, b, c) = b\,,\,\,\text{as soon as}\,\,  \delta^{-1}< c, b\ge 10\,.
$$
We have, thus, an obstacle condition on the intersection of the parabolic boundary 
\begin{equation}
\label{Gparab}
\pd_{parab} G=\{(a, b, c): a=c^2, 1\le b\le Q\}\,.
\end{equation}
with $b\ge 10$.
 But function $\Psi$ is concave in its three variables, therefore
\begin{equation}
\label{obstPsi}
\Psi(a, b, c) = b\,,\,\,\text{as soon as}\,\,  \delta^{-1}< c, b\ge 10\,.
\end{equation}
We used here the fact that by  \eqref{toPsi}
$$
\Psi(a, b, c) \le b\,.
$$

Function of $(F, f, u, v, \la)$ in the left hand side of \eqref{PsiBel} is absolutely continuous in each of its variables (we reasoned that $\Psi$ is concave), so then is the function in the right hand side of \eqref{PsiBel}.  Let us differentiate this formula in $\la$:
$$
-\frac{\pd \Psi}{\pd a} \frac{vF}{\la^2} -\frac12\frac{\pd \Psi}{\pd c} \frac{f}{\la^{3/2}} \le 0\,,
$$
because $\Bel$ decreases in $\la$.
We obtained one more property of $\Psi$:

\begin{equation}
\label{ac-deriv}
2a\Psi_a +c\Psi_c \ge 0\,.
\end{equation}

From \eqref{Th_form}  one see that 
\begin{equation}
\label{ab-deriv}
a\Psi_a +b\Psi_b \ge \Psi\,.
\end{equation}
This implies that function $\phi(t) := \Psi (ta, tb, c)$ defined on $t\in [\max(\frac{c^2}{a},\frac1{b}), \frac{Q}{b}]$ is such that $\frac{\phi(t)}{t}$ is increasing.
But $\phi$ is convex. So
\begin{equation}
\label{phi-est}
 Rat -\Psi (ta, tb, c) =Rat-\phi(t) \le  Ra(\frac{c^2}{a} + \frac1{b}) = R( c^2+ \frac{a}{b})\,.
 \end{equation}

 \subsection{The boundary conditions of $\Bel_Q$ and $\Psi_Q$.}
 \label{boundaryPsi}
 
 Let us consider $\Bel_Q(F, f, u, v, \la)$ on $F=v, f=v$. This corresponds to the testing function $\vf$ to be just $\1_J$.   We obtain exactly the function $B_Q$ from Section \ref{prop-weakT}:
 $$
 B_Q(u, v, \la)= \Bel_Q(v, v, u, v, \la)\,.
 $$
 
 By homogeneity we can write it as
 $$
B_Q(u, v, \la) = :\frac{1}{\sqrt{\la}} \Theta_Q(u\sqrt{\la}, \frac{v}{\sqrt{\la}})\,,
$$
where this equation serves as the definition of $\Theta_Q$. By \eqref{PsiBel} we have a chain of equalities:
$$
\frac{1}{\sqrt{\la}} \Theta_Q(u\sqrt{\la}, \frac{v}{\sqrt{\la}})= B_Q(v, v, u, v, \la)= \frac1{v} \Psi_Q (\frac{v^2}{\la}, uv, \frac{v}{\sqrt{\la}})\,.
$$
Thus, on the boundary $G_{parab}$ from \eqref{Gparab} we obtain
\begin{equation}
\label{onGparab}
\Psi_Q(c^2, b, c) = c \Theta_Q(\frac{b}{c}, c)\,.
\end{equation}

Recall that we found the formula for certain functionn $\Theta$ in \eqref{form-weakT}.
And we actually proved that 
\begin{equation}
\label{Thety}
\Theta_Q \le  A\Theta
\end{equation}
in the domain $H=\{(x, y)\in \bR_+^2: 1< xy<Q\}$.
But by formula \eqref{form-weakT} $\Theta(x, y) \le Qy$ at least for $y\le Q^{-1/2}$.

So for small $c, c\le Q^{-1/2}$ we have
$$
\Psi_Q(c^2, b, c) \le AQc^2\,.
$$
And this is true  for all $b\in (1, Q)$.

But  there is another obstacle condition that shows that
$$
\Psi_Q(c^2, b, 0) \ge A'Qc^2\,.
$$

%

\subsection{Full weak weighted estimate of square function}
\label{fullweakSqF}

First we provide information on full weak estimate for square function, namely on 
the asymptotics of  constant $C(\QQ)$ from \eqref{weakSqF}.

The following result proved by C. Domingo-Salazar, M. Lacey, G. Rey  \cite{DSaLaRey}.
\begin{theorem}
\label{fullweakSqF_th}
Let $w\in A_2$, then the norm of the square function operator 
$S\colon L^2(w)\to L^{2, \infty} (w)$ is bounded by $C [w]_{A_2}^{1/2} \log^{1/2}(1+ [w]_{A_\infty})$, where $C$ is an absolute constant.
\end{theorem}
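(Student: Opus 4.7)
The plan is to establish the distributional estimate
\[
w\bigl(\{x : Sf(x) > \lambda\}\bigr) \le C\,\frac{[w]_{A_{2}}\log(1+[w]_{A_{\infty}})}{\lambda^{2}}\,\|f\|_{L^{2}(w)}^{2},
\]
which upon taking a square root and supremum over $\lambda$ yields the claimed $L^{2}(w)\to L^{2,\infty}(w)$ norm. Normalize $\|f\|_{L^{2}(w)}=1$ and write $Q_{2}=[w]_{A_{2}}$, $Q_{\infty}=[w]_{A_{\infty}}$. The approach combines two ingredients: Buckley's sharp weak-type bound $\|M\|_{L^{2}(w)\to L^{2,\infty}(w)}\lesssim Q_{2}^{1/2}$ (contributing the $Q_{2}$ factor) and a weighted Chang--Wilson--Wolff (CWW) Gaussian good-$\lambda$ inequality (contributing the $\log(1+Q_{\infty})$ factor).

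First I would build a principal stopping tree $\mathcal{S}\subset \cD$ for $f$: the $\mathcal{S}$-children of $I\in \mathcal{S}$ are the maximal $I'\subsetneq I$ with $\La |f|\Ra_{I'} > 4 \La|f|\Ra_{I}$. This tree is Lebesgue-sparse, and via the Hyt\"onen--P\'erez $A_\infty$ Carleson embedding it is also $w$-Carleson with constant $CQ_{\infty}$. Split the Haar expansion of $f$ so that each Haar coefficient $\Delta_{J}f$ with $J$ lying between $I\in \mathcal{S}$ and its $\mathcal{S}$-children is assigned to a local martingale piece $f_{I}$. Then the pointwise bound $Sf \le \sum_{I\in \mathcal{S}} Sf_{I}\,\mathbf{1}_{I}$ holds, and by construction every dyadic average of $f_{I}$ lies in $[-4\La|f|\Ra_{I},\,4\La|f|\Ra_{I}]$.

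Next I would invoke the weighted CWW Gaussian good-$\lambda$ inequality: for $w\in A_{\infty}$ and any dyadic martingale $g$,
\[
w\bigl(\{Sg > 2\lambda,\ Mg\le \gamma\lambda\}\bigr) \le C\exp\!\bigl(-c/(\gamma^{2}Q_{\infty})\bigr)\, w\bigl(\{Sg>\lambda\}\bigr),
\]
obtained by transferring the unweighted CWW distribution inequality through the Fujii--Wilson quantitative reverse H\"older with exponent $1+1/(cQ_{\infty})$. Iterating this on $g = f_{I}$ and using the bound on averages, one gets the Gaussian tail
\[
w\bigl(\{x\in I : Sf_{I}(x) > N\La|f|\Ra_{I}\}\bigr) \le C\exp(-cN^{2}/Q_{\infty})\, w(I),\qquad N\ge 1.
\]
Now write $\{Sf>\lambda\}\subset E_{1}\cup E_{2}$ with $E_{1}=\{Mf>c\lambda\}$ and $E_{2}$ the union over $I\in \mathcal{S}$ with $\La|f|\Ra_{I}\le c\lambda$ of $\{x\in I : Sf_{I}(x)>\tfrac12\lambda\}$. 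Buckley controls $w(E_{1})\le CQ_{2}\lambda^{-2}$. For $E_{2}$, group the stopping intervals into dyadic bands $\La|f|\Ra_{I}\sim 2^{-j}\lambda$ (so $N_{I}\sim 2^{j}$), use the weighted Carleson embedding $\sum_{I\in \mathcal{S}}\La|f|\Ra_{I}^{2} w(I) \le C$ to bound the $w$-mass of the $j$-th band by $C4^{j}/\lambda^{2}$, and sum the Gaussian tails $\sum_{j}(4^{j}/\lambda^{2})\exp(-c4^{j}/Q_{\infty})$; the peak of this sum is at $4^{j}\sim Q_{\infty}$, and a careful splitting of the iteration (good-$\lambda$ performed in the weighted measure at every scale) collapses the resulting contribution to $C\log(1+Q_{\infty})\lambda^{-2}$.

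The main obstacle is precisely the last step: reducing the naive power loss $Q_{\infty}$ to the sharp logarithmic loss $\log(1+Q_{\infty})$. A crude summation of $w(I)\exp(-cN_{I}^{2}/Q_{\infty})$ over the sparse tree only yields $Q_{\infty}\lambda^{-2}$. To recover the logarithm one must iterate the good-$\lambda$ inequality \emph{in the weighted measure} rather than after a single transfer, exploiting at each level that Gaussian decay $e^{-c4^{j}/Q_{\infty}}$ beats the Carleson packing by an extra factor $Q_{\infty}/\log(1+Q_{\infty})$ once the stopping tree has ``fired'' $\log(1+Q_{\infty})$ times. The sharp Fujii--Wilson reverse H\"older is essential: any coarser transfer (say, via $A_{2}$) would replace $Q_{\infty}$ in the exponent by a power of $Q_{2}$, destroying the logarithmic dependence. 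The sharp version of this argument is the technical contribution of Domingo-Salazar--Lacey--Rey.
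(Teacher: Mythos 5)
First, a point of context: the paper does not prove this statement at all; Theorem \ref{fullweakSqF_th} is quoted from Domingo-Salazar--Lacey--Rey \cite{DSaLaRey}, and the paper's own machinery (Section \ref{chfun}) only establishes the log-free restricted weak type for the special test functions $w^{-1}\1_E$, where the union bound over the slabs $\langle \vf\rangle_I\sim 2^{-m}$ closes because $\vf\le w^{-1}$ forces $\langle w\rangle_\ell\le 2^{m+1}[w]_{A_2}$ on the relevant intervals. So your proposal has to be judged as a proof of the DSLR theorem itself, and there it has a genuine gap, in fact two related ones.

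The first is the covering step. You claim $\{Sf>\lambda\}\subset E_1\cup E_2$ with $E_2=\bigcup_{I\in\mathcal S}\{x\in I: Sf_I(x)>\tfrac12\lambda\}$. This is false as stated: the local pieces add in square, $S f(x)^2=\sum_{I\in\mathcal S}Sf_I(x)^2$, and a point can satisfy $Sf(x)>\lambda$, $Mf(x)\le c\lambda$, while every individual $Sf_I(x)$ stays below $\tfrac12\lambda$. To cover the level set you must distribute thresholds $\epsilon_m\lambda$ (or $\epsilon_m$ at the level of squares) over the slabs $\langle|f|\rangle_I\sim 2^{-m}\lambda$ with $\sum_m\epsilon_m\le 1$, and it is exactly the optimization over this budget --- roughly $\epsilon_m\sim 1/\log(1+[w]_{A_\infty})$ for the $\sim\log(1+[w]_{A_\infty})$ scales on which the Gaussian/exponential decay has not yet kicked in --- that produces the logarithm. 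The second gap is that the step where the naive loss $[w]_{A_\infty}$ is reduced to $\log(1+[w]_{A_\infty})$ is not carried out: your own summation $\sum_j(4^j/\lambda^2)e^{-c4^j/[w]_{A_\infty}}$ peaks at $[w]_{A_\infty}/\lambda^2$, and the sentence ``a careful splitting of the iteration \dots collapses the resulting contribution to $C\log(1+[w]_{A_\infty})\lambda^{-2}$'' is an assertion of the theorem's hardest point, not an argument --- you acknowledge as much by deferring to DSLR. The correct mechanism (in \cite{DSaLaRey}) is not an iteration of a weighted good-$\lambda$ inequality on each martingale block, but a sparse domination of $S$, an exponential (John--Nirenberg type) distributional bound for each slab's counting function with respect to Lebesgue measure, transfer to $w$-measure by the $A_\infty$ property $w(E)/w(Q)\lesssim(|E|/|Q|)^{c/[w]_{A_\infty}}$, and then the threshold-budgeting described above; until that bookkeeping is done explicitly, the claimed bound $C[w]_{A_2}^{1/2}\log^{1/2}(1+[w]_{A_\infty})$ is not established.
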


The claim of the theorem is equivalent to the following inequality that should be proved for an arbitrary function $\vf \in L^2((0,1), w)$:
\begin{equation}
\label{fullweakSq_ineq}
w\{x\in (0,1)\colon S\vf (x) >1\} \le C  [w]_{A_2}\log(1+ [w]_{A_\infty})\, \|\vf\|_{w}^2\,.
\end{equation}

Let us make several remarks. 

{\bf  Remark.}
In the previous section we studied the case of special functions $\vf$, namely, we proved that if $\vf= w^{-1}\1_{I}$, $I\in \cD$, then 
inequality \eqref{fullweakSq_ineq} can be strengthened. By this we mean that for such special test functions the estimate above
has the right hand side $C  [w]_{A_2}\, \|\vf\|_{w}^2$. There is no logarithmic blow-up.

{\bf  Remark.}
Exponent $p=2$ is critical for Theorem \ref{fullweakSqF_th}. By this we mean 
that one can quite easily deduce  from this theorem the result for $p>2$: 
if $w\in A_p$, then the norm of the square function operator 
$S\colon L^p(w)\to L^{p, \infty} (w)$ is bounded by $C [w]_{A_p}^{1/2} \log^{1/2}(1+ [w]_{A_\infty})$, 
where $C$ is an absolute constant. For that reduction to the case $p=2$ he reader can look at \cite{DSaLaRey}.
It is important to note that for $1\le p<2$, and $w\in A_p$, \cite{DSaLaRey} 
proves the estimate $C [w]_{A_p}^{1/2}$ for the weak norm of the square function operator.


\section{Restricted weak weighted estimate of the square function} 
\label{restrictedSqF}

\subsubsection{Sparse square function operators}

{\bf Definition.}
A family $\mathcal{S}$ of intervals of $\cD$ is called $\eps$-sparse if the following condition is satisfied
\begin{equation}
\label{sp-def}
\sum_{I\in \mathcal{S}, \atop I\subsetneq J} |I| \le \eps |J|,\quad \forall J\in \mathcal{S}\,.
\end{equation}

{\bf Definition.}
\label{sp-op}
Sparse square function operator is defined for each sparse family $\mathcal{S}$ as follows
$$
S^{\sp}\vf \df S^{\sp}_{\mathcal{S}}\vf \df \big(\sum_{I \in \mathcal{S}} \La \vf \Ra_I^2 \1_I\big)^{1/2}\,.
$$

\begin{theorem}
\label{toSqSp}
For any $\eps>0$ and any $\vf \in L^1$ there exists a constant $C=C(\eps)$ 
independent of $\vf$ and a sparse family $\mathcal{S}$ (depending on $\eps$ and on $\vf$) such that pointwisely almost everywhere
$$
S\vf \le C S^{\sp}\vf\,.
$$
\end{theorem}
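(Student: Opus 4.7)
The plan is to construct $\mathcal{S}$ iteratively by a two-criterion Calder\'on--Zygmund stopping time that exploits both the weak-$(1,1)$ boundedness of the dyadic maximal function $M$ and the classical weak-$(1,1)$ estimate for the square function $S$ itself. For each $x$ we then decompose $(S\vf)^2(x)$ along the tower of sparse ancestors of $x$ and control the ``in-between'' block at each level by transplanting the dyadic sum to a good point of the next smaller ancestor.

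\textbf{Stopping rule.} Fix a top dyadic interval $J_0$ (the extension to all of $\bR$ is a routine exhaustion). Put $J_0\in\mathcal{S}$, and given $L\in\mathcal{S}$, declare its \emph{principal children} to be the maximal dyadic $L'\subsetneq L$ satisfying at least one of
\[
\mathrm{(a)}\;\;\La|\vf|\Ra_{L'}>K\La|\vf|\Ra_L,\qquad\mathrm{(b)}\;\;|L'\cap B_L|\ge\tfrac14|L'|,
\]
where $B_L:=\{y\in L\colon S_L\vf(y)>K\La|\vf|\Ra_L\}$ and $S_L\vf(y)^2:=\sum_{I\in\cD,\,I\subseteq L,\,I\ni y}|\Delta_I\vf|^2$. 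The weak-$(1,1)$ bound for the dyadic maximal function gives the union of $(\mathrm{a})$-children measure $\le C_MK^{-1}|L|$; the classical weak-$(1,1)$ estimate for $S$ applied to $\vf\1_L$ yields $|B_L|\le C_SK^{-1}|L|$, whence the union of $(\mathrm{b})$-children has measure $\le 4C_SK^{-1}|L|$. Taking $K=K(\eps)$ large enough makes $\mathcal{S}$ an $\eps$-sparse family.

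\textbf{Pointwise domination.} Fix $x$ and list the sparse cubes containing it as $J_0=L_0\supsetneq L_1\supsetneq\cdots$, terminating at some $L_{N(x)}$ (possibly $N(x)=\infty$). Every dyadic $I\ni x$ with $I\subseteq L_0$ falls into exactly one of: $I\subseteq L_{N(x)}$, or $L_{k+1}\subsetneq I\subseteq L_k$ for a unique $k<N(x)$. Therefore
\[
(S\vf)^2(x)=\sum_{\substack{I\subseteq L_{N(x)}\\I\ni x}}|\Delta_I\vf|^2+\sum_{k=0}^{N(x)-1}\;\sum_{\substack{L_{k+1}\subsetneq I\subseteq L_k\\I\ni x}}|\Delta_I\vf|^2.
\]
For the first sum, $x$ lies in no principal child of $L_{N(x)}$, and Lebesgue differentiation applied to criterion $(\mathrm{b})$ forces $S_{L_{N(x)}}\vf(x)\le K\La|\vf|\Ra_{L_{N(x)}}$ at a.e.\ such $x$. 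For each $k<N(x)$, dyadic maximality of $L_{k+1}$ ensures $|L_{k+1}\cap B_{L_k}|\le\tfrac12|L_{k+1}|$: if the dyadic parent of $L_{k+1}$ is a proper subset of $L_k$ then it fails $(\mathrm{b})$, giving the bound after doubling; otherwise the parent equals $L_k$ and the bound follows from $|B_{L_k}|\le C_SK^{-1}|L_k|$ once $K$ exceeds a fixed absolute constant. Pick any $y_k\in L_{k+1}\setminus B_{L_k}$; since $x$ and $y_k$ both lie in $L_{k+1}$, $\1_I(x)=\1_I(y_k)$ for every $I\supsetneq L_{k+1}$, and so
\[
\sum_{\substack{L_{k+1}\subsetneq I\subseteq L_k\\I\ni x}}|\Delta_I\vf|^2=\sum_{\substack{L_{k+1}\subsetneq I\subseteq L_k\\I\ni y_k}}|\Delta_I\vf|^2\le S_{L_k}\vf(y_k)^2\le K^2\La|\vf|\Ra_{L_k}^2.
\]
Summing in $k$, $(S\vf)^2(x)\le K^2\sum_{L\in\mathcal{S},\,L\ni x}\La|\vf|\Ra_L^2=K^2\bigl(S^{\sp}_{\mathcal{S}}|\vf|\bigr)^2(x)$, which is the desired pointwise domination (applied to $|\vf|$, the standard convention when dominating $S\vf$ by a positive sparse operator).

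\textbf{Main obstacle.} The delicate calibration is the constant $\tfrac14$ in $(\mathrm{b})$: it is chosen so that even when $L_{k+1}$ is selected precisely because $S_{L_k}\vf$ is large on a substantial part of it, dyadic maximality still forces \emph{at least half} of $L_{k+1}$ to consist of good points $y_k$. Replacing $\tfrac14$ by a naive threshold ``$>\tfrac12$'' would allow $L_{k+1}$ to be entirely swallowed by the bad set $B_{L_k}$, wrecking the transplantation step that converts the ``between-layer'' sum into a pointwise value of $S_{L_k}\vf$. A secondary (but entirely routine) input is the weak-$(1,1)$ endpoint for the dyadic square function, which is invoked here as a black box.
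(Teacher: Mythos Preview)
Your argument is correct, but it is more elaborate than necessary and takes a genuinely different route from the paper's.

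The paper stops on a \emph{single} criterion built from the ``upward'' partial square function
\[
S_Q^{L}\vf:=\Big(\sum_{Q\subset I\subset L}(\Delta_I\vf)^2\Big)^{1/2},
\]
declaring $Q$ a child of $L$ when $Q$ is maximal with $S_Q^{L}\vf>C\La|\vf|\Ra_L$. The crucial observation you are not exploiting is that $S_Q^{L}\vf$ is a number depending only on the interval $Q$, constant for all $x\in Q$; hence every selected $Q$ sits inside $\{S\vf>C\La|\vf|\Ra_L\}$, and sparseness follows immediately from the weak $(1,1)$ bound for $S$. For the pointwise domination, the between-layer block $\sum_{L_{k+1}\subsetneq I\subseteq L_k}(\Delta_I\vf)^2$ is then \emph{literally} $(S_{\hat L_{k+1}}^{L_k}\vf)^2$, which is at most $C^2\La|\vf|\Ra_{L_k}^2$ by maximality of $L_{k+1}$. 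No density threshold, no good point $y_k$, no transplantation is needed.

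Your approach instead thresholds on the point-dependent localized square function $S_L\vf(y)$, which forces the density criterion (b), the calibration of the constant $\tfrac14$, the Lebesgue-differentiation step for the terminal block, and the good-point selection $y_k\in L_{k+1}\setminus B_{L_k}$. All of this is sound, and it is exactly the machinery one needs for general sparse domination of operators whose truncations are \emph{not} constant on dyadic cells (Calder\'on--Zygmund operators, for instance). For the dyadic square function that generality is unnecessary. Note also that your criterion (a) is never invoked in the pointwise step and can be dropped entirely; only (b) is doing work.
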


\begin{proof}
It is well known (see e.\,g. \cite{Wilson}) that the square function operator is weakly bounded in unweighted $L^1$.  
Let us call $A$ the norm of the operator $S$ from $L^1$ to $L^{1,\infty}$. Fix $\eps$ and let $C=100A/\eps$.
 We start with interval $I_0=(0,1)$, put $\mathcal{S}_0\df \{I_0\}$,  and define the first generation of stopping intervals $\mathcal{S}_1$ as follows:
 $Q\in \mathcal{S}_1$ if  it is the maximal interval in $I_0$ such that
 $$
 S_Q^{I_0}\vf \df \big(\!\!\sum_{I \in \cD, \atop Q\subset I \subset I_0} (\Delta_I \vf)^2\big)^{1/2}  > C\La |\vf| \Ra_{I_0}\,;
 $$
 
The second generations of stopping intervals $\mathcal{S}_2$ will be nested inside the first generation $\mathcal{S}_1$. 
For every $I\in \mathcal{S}_1$ we define its subintervals from $\cD$ by the same rule as before, 
but with $I$ playing the r\^ole of   $I_0$. Namely, we define the first generation of stopping intervals $\mathcal{S}_2$ inside $I\in \mathcal{S}_1$  as follows:
 $Q\in \mathcal{S}_2$ if it is the maximal interval in $I$ such that
 $$
 S_Q^{I}\vf \df \big(\!\!\sum_{J \in \cD, \atop Q\subset J \subset I} (\Delta_J \vf)^2\big)^{1/2}  > C\La |\vf| \Ra_{I}\,;
 $$
 
 We continue the construction of generations of intervals $\mathcal{S}_3, \mathcal{S}_4, \dots$ recursevely, and we put $\mathcal{S}\df \cup_{k=0}^\infty \mathcal{S}_k$.

 Notice that by the fact that operator $S$ and dyadic maximal operator $M$ are  
 weakly bounded in unweighted $L^1$ and  from our choice of constant $C$ at the 
 beginning of the proof, we get that
  $ \sum_{Q\in \mathcal{S}_1} |Q| \le \frac{\eps}{50}|I_0|$, and similarly,
  $$
  \sum_{Q\in \mathcal{S}_{k+1}, \atop Q\subset I} |Q| \le \frac{\eps}{50}|I|,\quad \forall I\in \mathcal{S}_k\,.
  $$
  
  Obviously, and with a good margin, we obtained that  $\mathcal{S}$ is $\eps$-sparse.
  
  Now to see the pointwise estimate of the theorem, let us notice that given $x\in I_0$, 
  which is not an end-point of any dyadic interval, we will be able to find the tower 
  of intervals $\dots \subsetneq I_{k}\subsetneq I_1\subsetneq I_0$ such that $x$ is contained in all of them and such that
  $I_k\in \mathcal{S}_k$. This tower may degenerate to just one interval $I_0$, or it can be an infinite tower. 
  But the set of points for which the tower is infinite has Lebesgue measure zero. This is clear from the fact that $\mathcal{S}$ is sparse.
  In any case,
  \begin{align*}
  S^2\vf(x) &= \sum_{I\in \cD, \atop I_1\subsetneq I \subset I_0} (\Delta_I\vf)^2 
    + \sum_{I\in \cD, \atop I_2\subsetneq I \subset I_1} (\Delta_I\vf)^2 
    \\
    &+\sum_{I\in \cD, \atop I_3\subsetneq I \subset I_2} (\Delta_I\vf)^2   
    +\dots
  \end{align*}
  
  But then, using our stopping criterion we see that  the last expression is bounded by
  $$
  C^2\big( \La |\vf|\Ra^2_{I_0} + \La |\vf|\Ra^2_{I_1} + \La |\vf|\Ra^2_{I_2}+\dots\big),
  $$
  which proves the theorem.

\end{proof}

{\bf  Remark.}
Now we will see what can be changed in the reasoning  of  \cite{DSaLaRey} in order 
to obtain the estimate better than \eqref{final-w-SqF} for a special choice of $\vf =w^{-1}$. 
In fact, the Bellman function technique of the previous section gave us a better estimate in this particular case $\vf=w^{-1}$:
\begin{align}
\label{final-w-SqF-test}
w\big\{x\in I_0 \colon S^{\sp}w^{-1} >3\big\} \le  A\QQ \int_{I_0} w^{-1}\, dx\,.
\end{align}

\bigskip

\begin{theorem}
\label{restrictedSqFsparse_thm}
Let $w\in A_2$ and $\mathcal{S}$ be a collection of sparse dyadic in servals in $\cD(I_0)$.  Let $\mathcal{S}^{\sp}$ be the sparse square function operator built on this collection. Then the restricted weak type of the operator $\mathcal{S}^{\sp}_{w^{-1}}$ from $L^2(w^{-1})$ to $L^{2, \infty}(w)$ is bounded by $A\QQ^{1/2}$, where $A$ is an absolute constant.
\end{theorem}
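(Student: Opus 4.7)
The plan is, by Theorem~\ref{toSqSp} and the homogeneity of $S^{\sp}$, to reduce the theorem to showing that for every $\lambda>0$ and every sparse family $\mathcal{S}\subset \cD(I_0)$,
\[
\lambda^2\, w\{x\in I_0 \colon S^{\sp}(\sigma\1_E)(x)>\lambda\}\le A\,Q\,\sigma(E),
\]
with $\sigma=w^{-1}$ and $Q=[w]_{A_2}$. Write $f=\sigma\1_E$ and $F=F_\lambda=\{S^{\sp}f>\lambda\}$. Since $(S^{\sp}f)^2=\sum_{I\in\mathcal{S}}\La f\Ra_I^2\1_I$ is piecewise constant on the dyadic grid, $F$ is a union $F=\bigsqcup_{J\in\mathcal{G}}J$ of pairwise disjoint maximal dyadic intervals, and the maximality of each $J\in\mathcal{G}$ forces the tail bound $\sum_{I\in\mathcal{S},\,I\supsetneq J}\La f\Ra_I^2\le \lambda^2$ (read off at any point $y$ in the dyadic sibling of $J$, which lies outside $F$).

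Next I would split the energy $\int_F(S^{\sp}f)^2\,w$ according to the position of $I\in\mathcal{S}$ relative to $\mathcal{G}$. The contribution of intervals containing some $J\in\mathcal{G}$ is controlled by the above tail estimate, and to avoid a self-referential bound I would run the decomposition at two separated levels $\lambda$ and $2\lambda$ in a good-$\lambda$ fashion: the tail term arising at level $2\lambda$ is then absorbed into a constant multiple of $\lambda^2 w(F_\lambda)$. The remaining ``local'' contribution is $\sum_{J\in\mathcal{G}_{2\lambda}}\int_J(\tilde S^{\sp,J}(f\1_J))^2\,w$, where $\tilde S^{\sp,J}$ is the sparse square function generated by $\mathcal{S}\cap \cD(J)$; the good-$\lambda$ setup forces $\tilde S^{\sp,J}(f\1_J)>c\lambda$ on $J$, so the problem is reduced to estimating $w\{x\in J\colon \tilde S^{\sp,J}(f\1_J)>c\lambda\}$.

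The main step is to prove the local bound $w\{x\in J\colon \tilde S^{\sp,J}(f\1_J)>c\lambda\}\le A\,Q\,\sigma(E\cap J)\lambda^{-2}$ so that, after summing over the disjoint $J$'s, one obtains $A\,Q\,\sigma(E)$. The trivial comparison $f\1_J\le\sigma\1_J$ followed by the testing inequality~\eqref{final-w-SqF-test} applied on $J$ gives only $A\,Q\,\sigma(J)$, which would aggregate to $A\,Q\,\sigma(I_0)$. To extract the restriction to $E$, I would introduce a Calder\'on--Zygmund stopping time $\cF$ inside each $J$ adapted to the $\sigma$-density of $\1_E$, namely the maximal dyadic intervals $K$ on which $\sigma(E\cap K)/\sigma(K)$ doubles relative to its $\cF$-parent. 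Within each principal block the function $f$ is comparable to a constant times $\sigma$, so \eqref{final-w-SqF-test} applies with the correct scale factor, and the Lebesgue-sparseness of $\cF$, together with $\sigma\in A_\infty$ inherited from $w\in A_2$ (with constant $O(Q)$), organizes the summation through a $\sigma$-Carleson estimate. The hardest part of the argument is verifying that, because $\1_E$ only takes the values $0$ and $1$, the sum of the contributions across the stopping generations is geometric and collapses to $\sigma(E)$, rather than producing the factor $\log(1+[w]_{A_\infty})$ present in the full weak-type bound of \cite{DSaLaRey}; this is the precise mechanism responsible for the restricted weak-type constant being exactly $\sqrt{Q}$ without any logarithmic correction.
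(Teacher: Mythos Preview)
Your outline takes a genuinely different route from the paper, but it has a real gap in the ``local'' step that, as written, loses a power of $Q$.

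The paper does not use a good-$\lambda$ argument or a Calder\'on--Zygmund stopping on the $\sigma$-density of $\mathbf 1_E$. Instead it slices the sparse family $\mathcal S$ according to the size of the Lebesgue averages $\langle\varphi\rangle_I$ into layers $\mathcal S_m=\{I:2^{-m-1}<\langle\varphi\rangle_I\le 2^{-m}\}$. The one place where the special structure $\varphi\le w^{-1}$ is used is the pointwise-type inequality
\[
\langle w\rangle_I \le [w]_{A_2}\langle w^{-1}\rangle_I^{-1}\le [w]_{A_2}\langle\varphi\rangle_I^{-1}\le 2^{m+1}[w]_{A_2}\quad\text{for }I\in\mathcal S_m,
\]
which converts $w$-measure of a union of sparse intervals into Lebesgue measure times $2^{m}Q$. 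Lebesgue sparseness then gives exponential decay in $m$, and the sum over $m$ converges to $A\,Q\int\varphi$. The final conversion $\int\varphi=\int\varphi^2w$ uses only $\varphi=w^{-1}\mathbf 1_E$.

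In your scheme, by contrast, the structure $\mathbf 1_E\in\{0,1\}$ is spent on the CZ tree (bounding the number of density-doubling generations), but the summation over the sparse intervals \emph{inside} each principal block still has to be controlled. Whether you do this by the strong estimate $\sum_{I\in\mathcal S,\,I\subset K}\langle\sigma\rangle_I^2 w(I)$ or by the weak estimate \eqref{final-w-SqF-test}, you are forced at some point to replace Lebesgue sparseness by a $\sigma$-Carleson condition $\sum_{I\in\mathcal S,\,I\subset K}\sigma(I)\lesssim\sigma(K)$, and this costs a factor $[\sigma]_{A_\infty}\lesssim Q$, not an absolute constant. That extra $Q$ is exactly the mechanism producing the $\log(1+[w]_{A_\infty})$ in \cite{DSaLaRey}; your claim that the $\{0,1\}$-valuedness of $\mathbf 1_E$ makes the block contributions telescope geometrically to $\sigma(E)$ handles the \emph{CZ} tree but does nothing about the \emph{sparse-family} summation inside each block. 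The missing idea is to avoid the $\sigma$-Carleson step altogether by using $\varphi\le w^{-1}$ to bound $\langle w\rangle_I$ directly, as in the paper's slicing argument.

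A secondary issue: the good-$\lambda$ part of your plan is not self-contained. Absorbing the tail at level $2\lambda$ into $\lambda^2 w(F_\lambda)$ presupposes the very quantity you are estimating; to iterate you would need an inequality of the form $w(F_{2\lambda})\le \tfrac12 w(F_\lambda)+\text{error}$, and you have not indicated where the contraction factor $\tfrac12$ comes from in this weighted setting.
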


We need the following well-known result:

\begin{lemma}
\label{weak-w-MAX}
Let $M$ be the dyadic maximal operator, and $w$ be in dyadic $A_2$. 
Then the norm of $M$ from $L^2(w)$ to $L^{2, \infty}(w)$ is bounded by $\QQ^{1/2}$.
\end{lemma}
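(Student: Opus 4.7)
The plan is to follow the classical Calder\'on--Zygmund stopping-time argument applied directly in the weighted setting, and then use the $A_2$ condition in one crucial Cauchy--Schwarz estimate to absorb the weight. Fix $f\in L^{2}(w)$ and $\lambda>0$. Let $\{Q_{j}\}$ be the family of maximal dyadic cubes with $\La |f|\Ra_{Q_{j}}>\lambda$. These cubes are pairwise disjoint and their union is exactly $\{Mf>\lambda\}$, so it suffices to prove
\[
\sum_{j} w(Q_{j}) \;\le\; \frac{\QQ}{\lambda^{2}}\int |f|^{2}\,w\,dx.
\]

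The key step is a pointwise bound on each $w(Q_{j})$ in terms of the local $L^{2}(w)$-mass of $f$ on $Q_{j}$. By the stopping condition and Cauchy--Schwarz applied to the factorization $|f|=|f|w^{1/2}\cdot w^{-1/2}$,
\[
\lambda^{2}|Q_{j}|^{2}\;\le\;\Bigl(\int_{Q_{j}}|f|\,dx\Bigr)^{2}\;\le\;\Bigl(\int_{Q_{j}}|f|^{2}w\,dx\Bigr)\Bigl(\int_{Q_{j}}w^{-1}\,dx\Bigr).
\]
Rearranging and then multiplying by $\La w\Ra_{Q_{j}}$ gives
\[
w(Q_{j})\;=\;|Q_{j}|\La w\Ra_{Q_{j}}\;\le\;\frac{\La w\Ra_{Q_{j}}\La w^{-1}\Ra_{Q_{j}}}{\lambda^{2}}\int_{Q_{j}}|f|^{2}w\,dx\;\le\;\frac{\QQ}{\lambda^{2}}\int_{Q_{j}}|f|^{2}w\,dx,
\]
which is exactly the required per-cube bound.

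Summing over the disjoint family $\{Q_{j}\}$ yields
\[
w\{Mf>\lambda\}\;=\;\sum_{j} w(Q_{j})\;\le\;\frac{\QQ}{\lambda^{2}}\int |f|^{2}w\,dx,
\]
so $\lambda^{2}w\{Mf>\lambda\}\le \QQ\|f\|^{2}_{L^{2}(w)}$, proving the claim. There is no real obstacle here: the whole proof is essentially a one-step reduction, with the $A_{2}$ condition used precisely to produce the factor $\La w\Ra_{Q_{j}}\La w^{-1}\Ra_{Q_{j}}\le \QQ$ after Cauchy--Schwarz. The only thing one has to be mildly careful about is the disjointness and maximality of the stopping cubes, which are standard facts about the dyadic maximal function.
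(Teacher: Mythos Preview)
Your proof is correct and follows essentially the same route as the paper: take the maximal dyadic stopping cubes, apply Cauchy--Schwarz with the splitting $|f|=|f|w^{1/2}\cdot w^{-1/2}$, and invoke the $A_2$ condition via the product $\La w\Ra_{Q_j}\La w^{-1}\Ra_{Q_j}\le\QQ$. The only cosmetic difference is that the paper organizes the estimate as a bootstrap inequality (with $\sum_I w(I)$ reappearing on the right-hand side after a global Cauchy--Schwarz), whereas you obtain a clean per-cube bound and sum directly; your version is if anything slightly more transparent.
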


\begin{proof}
Given a test function $\vf \ge 0$, let $\{I\}$ be the maximal dyadic intervals for which $M\vf >1$. Then
\begin{align*}
&\sum_I w(I) \le \int \sum \frac{w(I)}{|I|} \vf \1_I \, dx =  \int \sum_I \frac{w(I)}{|I|} \vf \1_I \, w^{-1/2} w^{1/2}  dx
\\
&\le \Big(\sum_I  \Big(\frac{w(I)}{|I|} \Big)^2 w^{-1}(I)\Big)^{1/2} \|f\|_w \le  \QQ^{1/2} \big( \sum_I w(I)\big)^{1/2} \|\vf\|_w
\end{align*}

Hence,
$$
\big(w\big\{x\colon M\vf >1\big\}\big)^{1/2} =(\sum_I w(I))^{1/2} \le \QQ^{1/2} \|\vf\|_w\,,
$$
which is precisely what the lemma claims.
\end{proof}

{\bf  Remark.}
One can skip the word ``dyadic" everywhere in the statement of this lemma. 
Also one can generalize the  statement to $\bR^n$. Then lemma remains trues, 
only the estimate becomes by $C_n \QQ^{1/2}$.  See \cite{Wilson}.

Let us  assume that $0\le \vf \le w^{-1} \1_{I_0}$. 

As in  \cite{DSaLaRey} let $\mathcal{S}$ be an $\eps$-sparse system of dyadic sub-intervals of $I_0$, and $S^{\sp}$ be a corresponding sparse square function.
 We split $\mathcal{S}= \mathcal{S}^0 \cup\cup_{m=0}^\infty \mathcal{S}_m$, where
 $\mathcal{S}^0$ consists of intervals of $\mathcal{S}$ such that $\La \vf \Ra_I >1$, and $\mathcal{S}_{m+1}$ consists of intervals of $\mathcal{S}$ such that 
 \begin{equation}
 \label{2m}
 2^{-m-1}  < \La |\vf| \Ra_I \le 2^{-m},\quad m=0,1,\dots\,.
 \end{equation}
 We denote 
 $$
 S_m^{\sp}\vf \df \big(\sum_{I\in \mathcal{S}_m} \La \vf \Ra_I^2 \1_I\big)^{1/2}\,.
 $$
 We are going to estimate these measures
 \begin{align*}
& W_0\df w\big\{x\in I_0\colon (S_0^{\sp}\vf)^2>1\big\},\,\, 
\\
& W\df w\big\{x\in I_0\colon \sum_{m=0}^{\infty} (S_{m}^{\sp}\vf)^2>4\big\}\,.
 \end{align*}

 The estimate of $W_0$ is easy. The sum $(S_0^{\sp}\vf)^2$ is supported on intervals where the 
 dyadic maximal function of $\vf$ is bigger than $1$. The set, 
 where the dyadic maximal function is bigger than $1$ has $w$-measure bounded by
 $\QQ \|\vf\|_w^2$ by Lemma \ref{weak-w-MAX}.
 
 \bigskip
 
Now we work with $W$, and we start exactly as in the estimate of $W$ above. Namely,
 the support of $S^{\sp}_m \vf$ is in $\cup Q^{(m)}_i$, where 
$Q^{(m)}_i\in  \cS_m$ that  are the maximal dyadic intervals  $Q$
with the property
\begin{equation}
\label{prop} 
 \sum_{I\in \cS_m, \,Q\subset I \subset I_0} \La |\vf|\Ra_I^2 \1_I \ge 2 ^{-m}
\end{equation}
By  maximality  and by \eqref{2m} 
\begin{equation}
\label{above}
 \sum_{I\in \cS_m, \,Q^{(m)}_i\subset I \subset I_0} \La |\vf|\Ra_I^2 \1_I \le 2 ^{-m} + 2^{-2m}\,.
 \end{equation}


To estimate $W$ we use  a   union bound.
\begin{align*}
&w\big\{x\in I_0\colon \sum_{m=0}^{\infty} (S_{m}^{\sp}\vf)^2>4\big\} 
\\
&\le w\big\{x\in I_0\colon \sum_{m=0}^{\infty} (S_{m}^{\sp}\vf)^2>\sum_{m=0}^\infty 2 ^{-m/2}\big\}
\\
& \le \sum_{m=0}^\infty w\big\{x\in \cup_i Q^{(m)}_i: (S_{m}^{\sp}\vf)^2> 2 ^{-m/2}\big\}=
\\
& \le \sum_{m=0}^\infty\sum_i w\big\{x\in Q^{(m)}_i: (S_{m}^{\sp}\vf)^2> 2 ^{-m/2}\big\}\,.
\end{align*}

Now let us fix $Q^{(m)}_i$. Then
$$
\sum_{I\in \cS_m, \, Q^{(m)}_i \subset I\subset I_0} \La |\vf|\Ra_I^2 \,\1_I \ge 2^{-m}
$$
On this interval $ \La |\vf|\Ra_I^2 \le 2^{2m}$ by the definition of $\cS_m$. Hence,
$$
\sum_{I\in \cS_m, \, Q^{(m)}_i \subset I\subset I_0} \1_I \ge 2^{m}\,.
$$
Therefore, every $Q^{(m)}_i$ lies at least $2^{m}$ generations down in $\cS$. By sparse  choice
\begin{equation}
\label{down}
\big|\cup_i Q^{(m)}_i\big|\le e^{-a\,2^{m}}\,.
\end{equation}

\bigskip

Consider maximal dyadic intervals $\ell \in \mathcal{S}_m$, $\ell\subset Q^{(m)}_i$, on which $(S_{m}^{\sp}\vf)^2> 2 ^{-m/2}$.  Call them $\cL(Q^{(m)}_i)$.
Let us estimate $\sum_{\ell\in \cL(Q^{(m)}_i)} |\ell|$.

Notice that for each  $\ell \in \cL(Q^{(m)}_i)$
$$
\sum_{I\in \cS_m, \,  \ell\subset I \subset Q^{(m)}_i} \La |\vf|\Ra_I^2\1_I \ge 2^{-m/2} - 2^{-m}- 2^{-2m} \ge \frac12 2^{-m/2}\,.
$$
This is by \eqref{above}. Now we use again that $ \La |\vf|\Ra_I^2\le 2^{2m}$ for all terms of this sum. Hence,
$$
\sum_{I\in \cS_m, \,  \ell\subset I \subset Q^{(m)}_i} \1_I \ge 2^{-m/2} - 2^{-m}- 2^{-2m} \ge  2^{-m}\,.
$$
Therefore, every $\ell$ from  $\cL(Q^{(m)}_i)$  lies at least $2^{m}$ generations down from $Q^{(m)}_i$ in $\cS_m$, and, thus, in $\cS$. By sparse construction
\begin{equation}
\label{cL}
\sum_{\ell\in \cL(Q^{(m)}_i)} |\ell| \le e^{-a 2^m} |Q^{(m)}_i|\,.
\end{equation}

\bigskip

Recall that we assumed 
$$
\vf \le w^{-1},
$$
Hence, 
$$
\La w^{-1}\Ra_\ell^{-1} \le \La \vf \Ra_\ell^{-1}.
$$
By definition $\La \vf\Ra_\ell \ge 2^{-m-1}$, hence 
\begin{align*}
&\La w\Ra_\ell \le \QQ\La w^{-1}\Ra_\ell^{-1} \le \QQ \La \vf\Ra_\ell^{-1}
\\
& \le 2^{m+1} \QQ , \quad \forall \ell \in \cL(Q^{(m)}_i).
\end{align*}

Therefore, by \eqref{cL} we obtain
\begin{align*}
& w\big\{x\in Q^{(m)}_i\colon  (S_{m}^{\sp}\vf)^2> 2 ^{-m/2}\big\} =\sum_{\ell \in   \cL(Q^{(m)}_i)} w(\ell) 
\\
& = \sum_{\ell \in \cL(Q^{(m)}_i)} \La w\Ra_\ell |\ell| \le 2^{m+1}\QQ  
\sum_{\ell \in \cL(Q^{(m)}_i)} |\ell | =  2^{m+1}\QQ\, e^{-a 2^m} \big| Q^{(m)}_i\big|.
\end{align*}

\medskip

We can conclude now that
$$
w\big\{x\in Q^{(m)}_i\colon  (S_{m}^{\sp}\vf)^2> 2 ^{-m/2}\big\}  \le 2^{m+1}\QQ\, e^{-a 2^m}  |Q^{(m)}_i|\le  
$$
$$
2^{2m+2}  \QQ  \, e^{-a 2^m}\,\La \vf\Ra_{Q^{(m)}_i}|Q^{(m)}_i|=
$$
$$
2^{2m+2}  \QQ  \, e^{-a 2^m}\,\int_{Q^{(m)}_i} \vf \, dx,
$$
where the last inequality follows from the definition of $\mathcal{S}_m$ and the fact that $Q^{(m)}_i\in \mathcal{S}_m$.

\medskip

Gathering things together, we obtain \eqref{final-w-SqF-test}:
\begin{align*}
&   w\big\{x\in I_0\colon \sum_{m=0}^{\infty} (S_{m}^{\sp}\vf)^2>4\big\}  \le
\\
&  \sum_{m=0}^\infty \sum_{i} w\big\{x\in Q^{(m)}_i\colon  (S_{m}^{\sp}\vf)^2> 2 ^{-m/2}\big\} 
\\
& \le \QQ \sum_{m=0}^\infty \sum_{i}  2^{2m+2} e^{-a 2^m} \int_{Q^{(m)}_i} \vf\, dx
\\
&\le \QQ \sum_{m=0}^\infty 2^{2m+2} e^{-a 2^m} \int_{\cup Q^{(m)}_i} \vf\, dx 
\\
& \le  \QQ \int_{I_0} \vf \, dx  \sum_{m=0}^\infty 2^{2m+2} e^{-a 2^m} \le A \QQ \int_{I_0} \vf\, dx \,.
\end{align*}

\medskip

Estimate
\begin{equation}
\label{vf}
 w\big\{x\in Q^{(m)}_i\colon \sum_{m=0}^{\infty} (S_{m}^{\sp}\vf)^2>4\big\} \le A \QQ \int_{I_0} \vf\, dx 
\end{equation}
was just obtained by using one assumption: $\vf \le w^{-1}$. We want to exchange in the right hand side of \eqref{vf} the integral $\int_{I_0} \vf\, dx $ for the integral $\int_{I_0} \vf^2 w\, dx $. This is trivial if one more property of $\vf$ holds, namely,
if pointwisely
$$
\vf \le C \vf^2 w\,.
$$
This is compatible with $\vf \le w^{-1}$ if for a.\,e. point $x\in I_0$ one of the following properties occurs: for every $x\in I_0$ either
1)   $\frac1{C} w^{-1}(x) \le \vf(x) \le w^{-1}(x)$, or 2) $\vf(x)=0$.

We conclude that \eqref{vf} holds for every $\vf$ of the form $\vf = w^{-1} \1_{E}$, where $E$ is a measurable subset of $I_0$.

In particular, Theorem \ref{restrictedSqFsparse_thm} is proved.

Now we can use sparse domination Theorem \ref{toSqSp}. Then this theorem gives us the following one.

\begin{theorem}
\label{restrictedSqF_thm}
Let $w\in A_2$ and $\mathcal{S}$.  Let $S$ be the dyadic square function operator. Then the restricted weak type of the operator $S_{w^{-1}}$ from $L^2(w^{-1})$ to $L^{2, \infty}(w)$ is bounded by $A\QQ^{1/2}$, where $A$ is an absolute constant.
\end{theorem}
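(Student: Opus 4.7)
The plan is to combine the two main results already established in this section: the pointwise sparse domination (Theorem \ref{toSqSp}) and the restricted weak-type bound for sparse square function operators (Theorem \ref{restrictedSqFsparse_thm}). With these in hand the result reduces to a short formal deduction, so no new idea is needed beyond matching the pieces.

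First I would fix a measurable set $E\subset I_0$ and assume $\|\1_E\|_{w^{-1}}^2=\int_E w^{-1}\,dx<\infty$, as otherwise the claim is vacuous. Since $w\in A_2^d$ forces $w^{-1}$ to be locally integrable, the function $\vf:=w^{-1}\1_E$ lies in $L^1(I_0)$. Fixing $\eps=1/2$ and applying Theorem \ref{toSqSp} to $\vf$ produces a sparse family $\mathcal{S}\subset\cD(I_0)$ (depending on $\vf$) and an absolute constant $C$ such that
\begin{equation*}
S\vf(x) \le C\, S^{\sp}_{\mathcal{S}}\vf(x) \qquad \text{for a.e. } x\in I_0.
\end{equation*}
Since a pointwise inequality is preserved in $L^{2,\infty}(w)$ quasi-norms, this gives
\begin{equation*}
\|S_{w^{-1}}\1_E\|_{L^{2,\infty}(w)} \;=\; \|S\vf\|_{L^{2,\infty}(w)} \;\le\; C\,\|S^{\sp}_{\mathcal{S}}\vf\|_{L^{2,\infty}(w)}.
\end{equation*}

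Next, because $\vf=w^{-1}\1_E$, the object $S^{\sp}_{\mathcal{S}}\vf$ is precisely $(S^{\sp}_{\mathcal{S}}\circ M_{w^{-1}})\1_E$, i.e.\ the twisted sparse operator $S^{\sp}_{\mathcal{S},w^{-1}}$ applied to the characteristic function $\1_E$. Theorem \ref{restrictedSqFsparse_thm} applied to this same sparse family $\mathcal{S}$ yields
\begin{equation*}
\|S^{\sp}_{\mathcal{S},w^{-1}}\1_E\|_{L^{2,\infty}(w)} \;\le\; A\,[w]_{A_2}^{1/2}\,\|\1_E\|_{w^{-1}},
\end{equation*}
with an absolute constant $A$. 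Chaining the two displayed inequalities produces the bound
\begin{equation*}
\|S_{w^{-1}}\1_E\|_{L^{2,\infty}(w)} \;\le\; CA\,[w]_{A_2}^{1/2}\,\|\1_E\|_{w^{-1}},
\end{equation*}
which is exactly the conclusion of Theorem \ref{restrictedSqF_thm}.

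Given the strength of the two prior theorems, no step in this reduction is genuinely difficult; the main point to watch is that the sparse family produced by Theorem \ref{toSqSp} depends on the test function $\vf$, but this causes no trouble since the restricted weak-type estimate for the sparse square function is uniform in the choice of sparse collection. All the substantive work has already been done in the preceding subsection, where the slicing/stopping-time argument together with the weighted weak-type maximal bound of Lemma \ref{weak-w-MAX} produced the crucial inequality \eqref{vf}.
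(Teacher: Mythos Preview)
Your proposal is correct and follows exactly the approach the paper takes: the paper's entire proof of Theorem~\ref{restrictedSqF_thm} is the one sentence ``Now we can use sparse domination Theorem~\ref{toSqSp}. Then this theorem gives us the following one.'' You have simply spelled out the obvious chaining of Theorem~\ref{toSqSp} with Theorem~\ref{restrictedSqFsparse_thm}, correctly noting that the dependence of the sparse family on $\vf$ is harmless because the bound in Theorem~\ref{restrictedSqFsparse_thm} is uniform over sparse collections.
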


\end{document}